\newcommand{\aspas}[1]{``{#1}''}
\theoremstyle{definition}
\newtheorem{theorem}{Theorem}[section]
\newtheorem{lemma}[theorem]{Lemma}
\newtheorem{example}[theorem]{Example}
\newtheorem{proposition}[theorem]{Proposition}
\newtheorem{definition}[theorem]{Definition}
\newtheorem{remark}[theorem]{Remark}
\newtheorem{corollary}[theorem]{Corollary}
\numberwithin{equation}{section}
\begin{document}

\renewcommand{\bf}{\bfseries}
\renewcommand{\sc}{\scshape}

\newcommand{\TC}{\text{TC}}

\title[The sectional number of a group homomorphism]%
{The sectional number of a group homomorphism}
\author[C. A. Ipanaque Zapata]{Cesar A. Ipanaque Zapata}
\address[C. A. Ipanaque Zapata]{Departamento de Matem\'atica, IME-Universidade de S\~ao Paulo, Rua do Mat\~ao, 1010 CEP: 05508-090, S\~ao Paulo, SP, Brazil}
\email{cesarzapata@usp.br}

\author[J. Palacios]{Joe Palacios}%
\address[J. Palacios]{Instituto de Matematica y Ciencias Afines, IMCA}
\email{jpalacios@imca.edu.pe}

\subjclass[2020]{Primary 20D05, 20K01, 20K27, 20K30; Secondary 20K25, 20D60}  

\keywords{Group, (injective, surjective) homomorphism, covering number, (local) section, sectional number, locally sectionable, extension, cohomology of groups, poset}

\begin{abstract} We explore the notion of sectional number of a group homomorphism, leading to a generalization of the covering number of a group, and present several characterizations when the sectional number is finite,  providing estimates for computing this invariant. 
\end{abstract}

\maketitle



\section{Introduction}\label{secintro}
In this article, the term \aspas{homomorphism} refers to a group homomorphism. We write $o(g)$ to refer to the order of the element $g$ and $|G|$ to refer to the order of a group $G$. We also write $\mathrm{Hom}(G,H)$  for the set of all homomorphisms from $G$ into another $H$. 

\medskip Recently, in \cite{zapata2024}, the first author establishes the foundations of sectional number (\cite{schwarz1966}, \cite{berstein1961}) in the categorical framework. This is achieved by introducing a numerical invariant that measures the minimal number of certain local sections of a morphism in a category with a certain topology. 

\medskip In this work, for a homomorphism $f:G\to H$, we study the sectional number of $f$ in the category of groups with the topology given by proper subgroups. This numerical invariant is called the sectional number of $f$, denoted by $\mathrm{sec}(f)$ (see Definition~\ref{defn:etale-sec-number}). In \cite{zapata2024}, the author also introduces the sectional number of $f$ with respect to a quasi Grothendieck topology given by (not necessarily proper) subgroups. This invariant coincides with $\mathrm{sec}(f)$ whenever $f$ does not admit a global section.

\medskip The main results of this paper are as follows: 
\begin{itemize}
    \item We present several equivalent definitions of the sectional number. In particular, we show that the sectional number of an epimorphism coincides with that of the corresponding quotient map (Theorem~\ref{thm:sec-coveringnumberhom}).
    \item We provide a characterization of the finiteness of the sectional number (Theorem~\ref{thm:finiteness-sec}). 
    \item We establish a general upper bound (Theorem~\ref{thm:upper-bound}).
    \item We describe the behavior of the sectional number under weak pullbacks (Theorem~\ref{thm:bajo-weak-pullback}).
    \item We compare the sectional number of a product in terms of the sectional number of its factors (Theorem~\ref{thm:product}).
    \item We provide a general lower bound (Theorem~\ref{thm:lower-bound}).
    \item We give a characterization of the sectional number in terms of maximal elements of its associated poset (Theorem~\ref{thm:sec-poset}).
    \item We show that $\mathrm{sec}(f)$ is the least number of subgroup restrictions needed to locally trivialize the 2-cocycle defining the extension (Theorem~\ref{thm:sec-coho}).
\end{itemize}

This paper is organized into two sections. In Section~\ref{sec:sectional-number}, we review the notion of sectional number along with its basic properties. We introduce new concepts that support and refine this notion. For instance, we define the property of being locally sectionable (Definition~\ref{defn:localsec}), the covering number of a homomorphism (Definition~\ref{defn:covering-number-hom}), and the cyclic covering number (Definition~\ref{defn:cyclic-cov-number}). Section~\ref{sec:applications} presents new insights into group theory, poset theory, and cohomology. For instance, we introduce and study the notion of an $H$-point (Definition~\ref{defn:h-punto}). We also associate a poset to a homomorphism and define the covering number of a poset (Definition~\ref{defn:cov-number-lp}).


\section{Sectional number}\label{sec:sectional-number}
 In this section, we recall the notion of sectional number along with its basic properties.  We adopt the notation used in \cite{zapata2024}. 
 
\subsection{Section and locally sectionable} Consider a homomorphism $f:G\to H$. A global \textit{section} of $f$ is a homomorphism $s:H\to G$ such that $f\circ s=\text{id}_H$. Given a subgroup $L$ of $H$, we say that a homomorphism $s:L\to G$ is a local \textit{section} of $f$ if $f\circ s=\mathrm{incl}_L$, where $\mathrm{incl}_L:L\hookrightarrow H$ is the inclusion homomorphism. 

\medskip Observe that any local section is a monomorphism. In addition, any local section  $s:L\to G$ of $f:G\to H$ is a global section of the restriction homomorphism $f_|:f^{-1}(L)\to L$.

\medskip Furthermore, we have the following remark (see the proof of \cite[Lema 2.5.7, p. 95]{zapata2017}).

\begin{remark}\label{rem:section-semidirectprod}
Let $f:G\to H$ be an epimorphism and $K=\mathrm{Ker}(f)\trianglelefteq G$. \begin{enumerate}
    \item[(1)] Suppose that $f$ admits a global section $s:H\to G$. The section $s$ induces the homomorphism $\widetilde{s}:H\to\mathrm{Aut}(K)$ given by \[\widetilde{s}(b)(a)=s(b)as(b^{-1})\] for any $b\in H$ and $a\in K$. Moreover, there exists an isomorphism $\Omega:K\rtimes_{\widetilde{s}} H\to G$ given by \[\Omega(a,b)=as(b)\] for any $a\in K$ and $b\in H$. Observe that $(f\circ \Omega)(a,b)=b$ for any $(a,b)\in K\rtimes_{\widetilde{s}} H$. Recall that $K\rtimes_{\widetilde{s}} H$ denotes the external semidirect product induced by $\widetilde{s}$ (i.e., the underlying set of this group is to be the set product $K\times H$ and the binary operation on $K\times H$ is defined by the rule $(a_1,b_1)\cdot (a_2,b_2)=(a_1\widetilde{s}(b_1)(a_2),b_1b_2)$, see \cite[p. 75]{robinson2003}). Note that if $G$ is abelian (more generally, if $K\subseteq Z(G)$, where $Z(G)$ denotes the center of $G$), then $\widetilde{s}$ is the trivial homomorphism and  $K\rtimes_{\widetilde{s}} H$ coincides with the usual direct product $K\times H$.
    \item[(2)] Suppose that there exists a homomorphism $\Omega:K\rtimes_{\widetilde{s}} H\to G$ such that \[(f\circ \Omega)(a,b)=b\] for any $(a,b)\in K\rtimes_{\widetilde{s}} H$. The map $s:H\to G$ given by \[s=\Omega\circ \iota_2,\] where $\iota_2:H\to K\rtimes_{\widetilde{s}} H$ is defined by the rule $\iota_2(b)=(1,b)$ for any $b\in H$, is a global section of $f$.  
\end{enumerate}  
\end{remark}

Hence, by Remark~\ref{rem:section-semidirectprod}, an epimorphism $f:G\to H$ admits a global section (i.e., the short exact sequence $1\to\mathrm{Ker}(f)\hookrightarrow G\stackrel{f}{\to}H\to 1$ splits) if and only if there exists an isomorphism $\Omega:K\rtimes H\to G$ such that $f\circ\Omega=\pi_2$.


\medskip We introduce the following notion.

\begin{definition}\label{defn:localsec}
 A homomorphism $f:G\to H$ is \textit{locally sectionable} if for each element $b\in H$ with $b\neq 1$, there exists a subgroup $L$ of $H$ such that $b\in L$ and $f$ admits a local section defined in $L$, equivalently, there exists a collection $\{H_\lambda\}_{\lambda\in\Lambda}$ of subgroups of $H$ such that $H=\bigcup_{\lambda\in\Lambda}H_\lambda$ and $f$ admits a local section defined in each $H_\lambda$.    
\end{definition} 

Let $G$ and $H$ be groups. Observe that if there exists a locally sectionable homomorphism from $G$ to $H$, then there exists a collection $\{H_\lambda\}_{\lambda\in\Lambda}$ of subgroups of $H$ such that $H=\bigcup_{\lambda\in\Lambda}H_\lambda$ and for each $\lambda\in\Lambda$, $G$ admits a copy of $H_\lambda$ as a subgroup. In particular, for each element $x\in H$ there exists an element $y\in G$ such that $o(y)=o(x)$. 

\medskip In addition, we have the following remark. 

\begin{remark}\label{rem:locally-sec-implies-epi}
Let $f:G\to H$ and $g:H\to K$ be homomorphisms.
\begin{enumerate}
    \item[(1)] If $f$ is locally sectionable (for example if it admits a global section), then $f$ is an epimorphism. The other implication does not hold, for instance, the quotient map $\mathbb{Z}\to\mathbb{Z}_n:=\mathbb{Z}/n\mathbb{Z}$ with $n\neq 0$, is an epimorphiam but is not locally sectionable  (indeed, observe that, for any subgroup $L$ of $\mathbb{Z}/n\mathbb{Z}$, there is not a nontrivial homomorphism from $L$ to $\mathbb{Z}$ because $L$ is a torsion group and $\mathbb{Z}$ is torsion free).
    \item[(2)] If $f$ and $g$ are locally sectionable, then $g\circ f$ is also locally sectionable. 
\end{enumerate} 
\end{remark}

The next statement shows that the other implication of Remark~\ref{rem:locally-sec-implies-epi}(1) holds whenever $H$ is torsion free.

\begin{lemma}\label{lem:torsion-free-localsec-equiv-epi}
    Let $f:G\to H$ be a homomorphism with $H$ nontrivial. 
   \begin{enumerate}
       \item[(1)] Assume that $H$ is torsion free. We have $f$ is locally sectionable if and only if $f$ is an epimorphism.
       \item[(2)] Assume that $H$ is a torsion group. We have $f$ is locally sectionable if and only if for each $b\in H$ there exists an element $a\in G$ such that $f(a)=b$ and $o(a)=o(b)$.
   \end{enumerate} 
\end{lemma}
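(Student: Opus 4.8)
The plan is to handle both equivalences by reducing, for each nontrivial $b\in H$, to the cyclic subgroup $\langle b\rangle$, and to exploit the fact that a homomorphism out of a cyclic group is completely determined by the image of a generator, subject only to an order constraint. Throughout I will use two facts already recorded above: that any local section is a monomorphism, and that a locally sectionable homomorphism is automatically an epimorphism (Remark~\ref{rem:locally-sec-implies-epi}(1)).

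For part (1), the forward implication is immediate from Remark~\ref{rem:locally-sec-implies-epi}(1). For the converse, assume $f$ is an epimorphism and fix $b\in H$ with $b\neq 1$. Since $H$ is torsion free, $\langle b\rangle$ is infinite cyclic, hence free on $b$. Choosing any $a\in G$ with $f(a)=b$ (available by surjectivity), the assignment $b\mapsto a$ extends uniquely to a homomorphism $s\colon\langle b\rangle\to G$ precisely because $\langle b\rangle$ is free; then $f\circ s$ and $\mathrm{incl}_{\langle b\rangle}$ agree on the generator $b$, hence coincide, so $s$ is a local section defined on $L=\langle b\rangle\ni b$. As $b$ was arbitrary, $f$ is locally sectionable.

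For part (2), the forward implication in fact holds for arbitrary $H$: given $b\neq 1$, pick a subgroup $L\ni b$ carrying a local section $s$ and set $a:=s(b)$; then $f(a)=b$ and, since $s$ is a monomorphism, $o(a)=o(b)$ (for $b=1$ take $a=1$). For the converse, fix $b\neq 1$ and write $n:=o(b)$, which is finite because $H$ is a torsion group, so $\langle b\rangle$ is cyclic of order $n$. By hypothesis choose $a\in G$ with $f(a)=b$ and $o(a)=n$, and define $s(b^k)=a^k$. The crucial point is well-definedness: $b^k=b^j$ iff $n\mid k-j$, and since $o(a)=n$ this forces $a^k=a^j$, so $s\colon\langle b\rangle\to G$ is a genuine homomorphism; it is then a local section at $b$ exactly as in part (1).

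The only real subtlety — and the step I would treat carefully — is the order condition in part (2). Surjectivity alone provides some preimage $a$ of $b$, but $f(a)=b$ yields only $o(b)\mid o(a)$, and if $o(a)>o(b)$ (or $o(a)=\infty$) the rule $b\mapsto a$ fails to respect the relation $b^n=1$ and does not define a homomorphism on $\langle b\rangle$. Imposing $o(a)=o(b)$ is precisely what repairs this, which is why the torsion hypothesis cannot be dropped; the quotient $\mathbb{Z}\to\mathbb{Z}_n$ of Remark~\ref{rem:locally-sec-implies-epi}(1) is exactly the obstruction that this condition rules out.
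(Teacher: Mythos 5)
Your proposal is correct and follows essentially the same route as the paper: both directions are handled by restricting to the cyclic subgroup $\langle b\rangle$ and defining $s(b^j)=a^j$ for a preimage $a$ of $b$, with the torsion-free (resp.\ equal-order) hypothesis guaranteeing well-definedness, and the forward implications coming from the definition together with the fact that local sections are monomorphisms. Your explicit remark on why $o(a)=o(b)$ is exactly the needed repair is a nice articulation of what the paper leaves implicit, but it is not a different argument.
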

\begin{proof}
\noindent \begin{enumerate}
       \item[(1)] The sufficiency of the condition follows from the Definition~\ref{defn:localsec}. Now, suppose that $f$ is an epimorphism. Let $b\in H$ with $b\neq 1$. Since $f$ is surjective, there exists an element $a\in G$ such that $f(a)=b$. Consider the map $s:\langle b\rangle\to G$ given by \[s(b^j)=a^j\] for any $j\in\mathbb{Z}$. Since $b$ is of order infinite (here we use that $H$ is torsion free), $s$ is well defined. Observe that $s$ is a homomorphism. In addition, $(f\circ s)(b^j)=b^j$ for any $j\in\mathbb{Z}$. Consider $L=\langle b\rangle$. Hence, $s:L\to G$ is a local section of $f$ with $b\in L$. Therefore, $f$ is locally sectionable. 
  \item[(2)] Suppose for each $b\in H$ there exists an element $a\in G$ such that $f(a)=b$ and $o(a)=o(b)$. Let $b\in H$ be an element with $b\neq 1$. Consider $a\in G$ such that $f(a)=b$ and $o(a)=o(b)$. Let $s:\langle b\rangle\to G$ be the map given by \[s(b^j)=a^j\] for any $j\in\mathbb{Z}$. Since $o(a)=o(b)$, $s$ is well defined. Likewise, as in Item (1), consider $L=\langle b\rangle$, and we conclude that $s:L\to G$ is a local section of $f$ with $b\in L$. Therefore, $f$ is locally sectionable.  

  The other implication is given by Definition~\ref{defn:localsec}. 
  \end{enumerate} 
\end{proof}

 \subsection{Sectional number} From \cite[p. 492]{harver1959} (see also \cite[p. 1071]{rosenfeld1963}, \cite[p. 44]{cohn1994}), given a group $G$, the \textit{covering number} of $G$, denoted by $\sigma(G)$, is the least positive integer $m$ such that there are $m$ distinct proper subgroups $G_j$ of $G$ with $G=G_1\cup\cdots\cup G_m$. Observe that if $G$ is Noetherian, then we can assume that each $G_i$ is maximal in $G$, so has prime index. We set $\sigma(G)=\infty$ if no such $m$ exists. For instance, $\sigma(G)=\infty$ whenever $G$ is cyclic. Also, we have $\sigma(\mathbb{Q})=\infty$ (see after of \cite[Example 3.12]{zapata2024}). If $f:G\to H$ is an epimorphism, then $\sigma(G)\leq\sigma(H)$. In particular, if $G$ is a group and $N\trianglelefteq G$ is a normal subgroup, then $\sigma(G)\leq \sigma(G/N)$. It is well known that $\sigma(G)\geq 3$ for any group $G$ and the equality holds if and only if there exists an epimorphism $G\to \mathbb{Z}_2\times\mathbb{Z}_2$. 
 
 \medskip In \cite[Example 3.12(i)]{zapata2024}, the first author introduced the notion of sectional number of a  morphism in an abstract category.

\begin{definition}[Sectional Number]\label{defn:etale-sec-number}
Let $f:G\to H$ be a homomorphism. The \textit{sectional number} of $f$, denoted by $\mathrm{sec}(f)$, is the least positive integer $m$ such that there exist proper subgroups $H_1,\ldots,H_m$ of $H$ such that $H=H_1\cup\cdots\cup H_m$, and for each $i=1,\ldots,m$, there exists a local section  $\sigma_i:H_i\to G$ of $f$, i.e., a homomorphism satisfying $f\circ\sigma_i=\text{incl}_{H_i}$, that is, the following diagram commutes:
\begin{eqnarray*}
\xymatrix{ G \ar[rr]^{f} & &H  \\
        &  H_i\ar@{-->}[lu]^{\sigma_i}\ar@{^{(}->}[ru]_{\text{incl}_{H_i}} & } 
\end{eqnarray*}  We set $\mathrm{sec}(f)=\infty$ if no such $m$ exists.
\end{definition}

Since the local section condition is an extra constraint, it follows that $\mathrm{sec}(f)\geq \sigma(H)$ for any homomorphism $f:G\to H$. Furthermore, if $\mathrm{sec}(f)<\infty$, then $f$ is locally sectionable and thus $f$ is an epimorphism. The other implication is not true. For instance, any epimorphism $f:G\to \mathbb{Q}$ whose codomain is the additive group of rational numbers, is locally sectionable, see Lemma~\ref{lem:torsion-free-localsec-equiv-epi}(1); however, $\text{sec}(f)=\infty$ because $\infty=\sigma(\mathbb{Q})\leq\mathrm{sec}(f)$. 

\medskip We have the following example.

\begin{example}\label{exam:codomain-cyclic}
\noindent\begin{enumerate}
\item[(1)] We have that $\mathrm{sec}(\overline{1})=\infty$, where $\overline{1}:G\to H$ is the trivial homomorphism (i.e., $\overline{1}(g)=1$ for any $g\in G$). 
    \item[(2)] Let $f:G\to H$ be a homomorphism. If $G$ or $H$ is a cyclic group, then \[\mathrm{sec}(f)=\infty.\] Note that the equality $\mathrm{sec}(f)=\infty$ always holds whenever $f$ is not surjective. For the case $f$ is surjective, recall the inequalities $\mathrm{sec}(f)\geq\sigma(H)\geq\sigma(G)$.
    \item[(3)] Let $f:G\to H$ be a homomorphism where $G$ is torsion free and $H$ is a torsion group. We have \[\mathrm{sec}(f)=\infty\] because $f$ is not locally sectionable. Indeed, for any subgroup $L$ of $H$, there is not a nontrivial homomorphism from $L$ to $G$ because $L$ is a torsion group and $G$ is torsion free.
\end{enumerate}  
\end{example}

The following remark shows that the sectional number is a generalization of the covering number.

\begin{remark}\label{rem:basic-rem}
 Let $f:G\to H$ be a homomorphism. If $f$ admits a global section, then \[\mathrm{sec}(f)=\sigma(H)\geq \sigma(G).\] In particular, $\mathrm{sec}(f)=\sigma(H)=\sigma(G)$ whenever $f:G\to H$ is an isomorphism.  
\end{remark} 

\begin{example}\label{exam:epi-vector-spaces}
\noindent\begin{enumerate}
    \item[(1)] Let $G$ and $H$ be vector spaces over a field $\mathbb{K}$. Let $f:G\to H$ be an $\mathbb{K}$-epimorphism. Since any vector space has a basis, $f$ admits a global section and $\mathrm{sec}(f)=\sigma(H)$. 
    \item[(2)] Let $G$ be a noncyclic \textit{elementary abelian $p$-group} for some prime $p$, i.e., $G$ is abelian and all its non-identity elements have the same order $p$ (and, of course, $G$ is a vector space over $\mathbb{F}_p$ equipped with the $\mathbb{F}_p$-module structure given by  $(z+p\mathbb{Z})a:=za$ for any $z+p\mathbb{Z}\in \mathbb{F}_p:=\mathbb{Z}/p\mathbb{Z}$ and $a\in G$). Assume that $f:G\to \mathbb{Z}_p^n$ is an epimorphism, where $\mathbb{Z}_p^n:=\mathbb{Z}_p\times\cdots\times \mathbb{Z}_p$ ($n$ times), with $n\geq 2$. Observe that $f$ is indeed a $\mathbb{F}_p$-epimorphism, and hence, by Item (1), \[\mathrm{sec}(f)= p+1.\] Recall that $\sigma(\mathbb{Z}_p^n)=p+1$ with $n\geq 2$ (see \cite[p. 1071]{rosenfeld1963}).  
\end{enumerate}   
\end{example}

We will see that the sectional number of $f:G\to H$ can be significantly larger than the covering number of $H$. In view of Remark~\ref{rem:basic-rem}, such a map $f$ cannot admit a global section.

\medskip First, we present the following statement.

\begin{proposition}\label{prop:diagram}
  Consider the commutative diagram of homomorphisms:  
\begin{eqnarray*}
\xymatrix{ G^\prime \ar[d]^{f^\prime} & G \ar[r]^{\varphi} \ar[d]_{f} & G^\prime \ar[d]^{f^\prime} & \\
     H^\prime \ar[r]_{\xi} & H  \ar[r]_{\psi} &  H' &}
\end{eqnarray*} Assume that $\psi\circ \xi=\mathrm{id}_{H^\prime}$. If $\xi$ is an epimorphism or $f'$ does not admit a global section, then \[\mathrm{sec}(f)\geq\mathrm{sec}(f^\prime)\]
  and the equality holds whenever $\varphi$ is an isomorphism and $\xi$ is an epimorphism. 
 \end{proposition}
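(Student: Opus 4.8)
The plan is to prove $\mathrm{sec}(f)\geq\mathrm{sec}(f^\prime)$ by transporting a witnessing family for $f$ across the diagram, and then to upgrade this to equality under the stronger hypotheses. I may assume $\mathrm{sec}(f)=m<\infty$, since otherwise the inequality is vacuous. So I fix proper subgroups $H_1,\dots,H_m$ of $H$ with $H=H_1\cup\cdots\cup H_m$ together with local sections $\sigma_i\colon H_i\to G$ of $f$, so that $f\circ\sigma_i=\mathrm{incl}_{H_i}$. For each $i$ I would set $L_i^\prime=\xi^{-1}(H_i)\leq H^\prime$ and define $s_i^\prime=\varphi\circ\sigma_i\circ(\xi|_{L_i^\prime})\colon L_i^\prime\to G^\prime$; this is a well-defined homomorphism because $\xi(L_i^\prime)\subseteq H_i$. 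Using the commutativity $f^\prime\circ\varphi=\psi\circ f$, then $f\circ\sigma_i=\mathrm{incl}_{H_i}$, and finally $\psi\circ\xi=\mathrm{id}_{H^\prime}$, one computes for $\ell\in L_i^\prime$ that
\[
f^\prime(s_i^\prime(\ell))=\psi\big(f(\sigma_i(\xi(\ell)))\big)=\psi(\xi(\ell))=\ell,
\]
so $f^\prime\circ s_i^\prime=\mathrm{incl}_{L_i^\prime}$ and $s_i^\prime$ is a local section of $f^\prime$ on $L_i^\prime$. Since $\xi(H^\prime)\subseteq H=\bigcup_iH_i$, the preimages satisfy $H^\prime=\bigcup_i\xi^{-1}(H_i)=\bigcup_iL_i^\prime$, so the $L_i^\prime$ cover $H^\prime$.

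The one remaining point, and the crux of the argument, is the \emph{properness} of each $L_i^\prime$, which is exactly what the disjunctive hypothesis secures. If $L_i^\prime=H^\prime$ then $\xi(H^\prime)\subseteq H_i$. In the case that $\xi$ is an epimorphism this gives $H=\xi(H^\prime)\subseteq H_i$, contradicting $H_i\subsetneq H$; in the case that $f^\prime$ admits no global section, the equality $L_i^\prime=H^\prime$ would make $s_i^\prime\colon H^\prime\to G^\prime$ a global section of $f^\prime$, again a contradiction. Hence every $L_i^\prime$ is proper, and the family $\{(L_i^\prime,s_i^\prime)\}_{i=1}^m$ witnesses $\mathrm{sec}(f^\prime)\leq m=\mathrm{sec}(f)$.

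For the equality I would add the hypotheses that $\varphi$ is an isomorphism and $\xi$ is an epimorphism. From $\psi\circ\xi=\mathrm{id}_{H^\prime}$ the map $\xi$ is injective, hence bijective, hence an isomorphism with $\psi=\xi^{-1}$; in particular $\psi$ is an isomorphism and $\xi\circ\psi=\mathrm{id}_H$. I then run the symmetric construction: assuming $\mathrm{sec}(f^\prime)=n<\infty$ with proper subgroups $H_1^\prime,\dots,H_n^\prime$ covering $H^\prime$ and local sections $\tau_j\colon H_j^\prime\to G^\prime$ of $f^\prime$, I set $M_j=\psi^{-1}(H_j^\prime)=\xi(H_j^\prime)\leq H$ and $s_j=\varphi^{-1}\circ\tau_j\circ(\psi|_{M_j})\colon M_j\to G$. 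Because $\psi$ is now an isomorphism, the $M_j$ are proper and cover $H$, and the same computation, using injectivity of $\psi$ to pass from $\psi(f(s_j(h)))=\psi(h)$ to $f(s_j(h))=h$, shows each $s_j$ is a local section of $f$. This gives $\mathrm{sec}(f)\leq n=\mathrm{sec}(f^\prime)$, which with the inequality already proved yields $\mathrm{sec}(f)=\mathrm{sec}(f^\prime)$. Equivalently, one may simply note that $f^\prime=\psi\circ f\circ\varphi^{-1}$ with $\psi,\varphi$ isomorphisms, so the two homomorphisms are isomorphic and the invariant agrees.

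I expect the main obstacle to be precisely the properness dichotomy in the second paragraph: the transported local sections and the covering of $H^\prime$ follow formally from the commutativity and $\psi\circ\xi=\mathrm{id}_{H^\prime}$, but without one of the two stated hypotheses a pulled-back subgroup $\xi^{-1}(H_i)$ could exhaust $H^\prime$, so the resulting family would fail to consist of proper subgroups and hence would not be an admissible witness for $\mathrm{sec}(f^\prime)$.
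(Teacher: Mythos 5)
Your proof is correct and follows essentially the same route as the paper: pull the covering back along $\xi$, transport each local section as $\varphi\circ\sigma_i\circ\xi$, and use the disjunctive hypothesis only to secure properness of the $\xi^{-1}(H_i)$, with the reverse inequality obtained from the symmetric diagram once $\varphi$ and $\xi$ (hence $\psi$) are isomorphisms. You are in fact slightly more explicit than the paper about why the properness dichotomy works and why $\xi\circ\psi=\mathrm{id}_H$ holds in the equality case, but the argument is the same.
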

 \begin{proof}
 Let $L$ be a subgroup of $H$ and consider $L':=\xi^{-1}(L)$, it is a subgroup of $H'$. Note that a homomorphism $s:L\to G$ produces a homomorphism  $\delta=\left(L'\stackrel{\xi}{\to} L\stackrel{s}{\to} G\stackrel{\varphi}{\to} G^\prime\right).$ If $\psi\circ \xi=\mathrm{id}_{H^\prime}$ and $f\circ s=\mathrm{incl}_L$, then $f^\prime\circ\delta=\mathrm{incl}_{L'}$. 

Now, if $\mathrm{sec}(f)=m$ and $\{H_1,\ldots,H_m\}$ is a collection of proper subgroups of $H$ such that $H=H_1\cup\cdots\cup H_m$ and in each $H_i$ there exists a local section $s_i:H_i\to G$ of $f$. By the construction above, in each subgroup $\xi^{-1}(H_i)$ there exists a local section $\delta_i:\xi^{-1}(H_i)\to G'$ of $f'$. Since $\xi$ is an epimorphism or $f'$ does not admit a global section, $\{\xi^{-1}(H_1),\ldots,\xi^{-1}(H_m)\}$ is a collection of proper subgroups of $H'$ satisfying $H'=\xi^{-1}(H_1)\cup\cdots\cup\xi^{-1}(H_m)$. Hence, we conclude that $\mathrm{sec}(f')\leq m=\mathrm{sec}(f)$.  
  
Now, suppose that $\varphi$ is an isomorphism. Notice that the other inequality $\mathrm{sec}(f)\leq\mathrm{sec}(f^\prime)$ follows from the commutative diagram:
\begin{eqnarray*}
\xymatrix{ G \ar[d]^{f} & G^\prime \ar[r]^{\varphi^{-1}} \ar[d]_{f^\prime} & G \ar[d]^{f} & \\
     H \ar[r]_{\psi} & H^\prime  \ar[r]_{\xi} &  H &}
\end{eqnarray*}
 \end{proof} 

Motivated by the notion of covering number, we present the following definition.

\begin{definition}[Covering Number of a Homomorphism]\label{defn:covering-number-hom}
   Let $f:G\to H$ be a homomorphism. The \textit{covering number} of $f$, denoted by $\sigma(f)$, is the least positive integer $m$ such that there exist $G_1,\ldots,G_m$ proper subgroups of $G$ such that $G=G_1\cup\cdots\cup G_m$ and for each $i=1,\ldots,m$, $\mathrm{Ker}(f)\subsetneq G_i$ and there exists an isomorphism $\Omega_i:\mathrm{Ker}(f)\rtimes f(G_i)\to G_i$ such that $f\circ\Omega_i=\pi_2$. 
\end{definition}

Now we present some equivalent definitions of sectional number. In particular, we show that the sectional number of an epimorphism coincides with the sectional number of a quotient map.

\begin{theorem}\label{thm:sec-coveringnumberhom}
    Let $f:G\to H$ be an epimorphism.  
  \begin{enumerate}
        \item[(1)] We have \[\mathrm{sec}(f)\geq \sigma(f)\] and the equality holds whenever $f$ does not admit a global section. 
        \item[(2)] We have \[\mathrm{sec}(f)=\mathrm{sec}(q_f),\] where $q_f:G\to G/\mathrm{Ker}(f)$ is the usual quotient map (i.e., $q_f(g)=g\mathrm{Ker}(f)$ for all $g\in G$).
        \item[(3)] Suppose that $f$ does not admit a global section. We have that $\mathrm{sec}(f)=\sigma(q_f)$.
    \end{enumerate}
\end{theorem}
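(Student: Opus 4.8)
The plan is to treat all three parts through one device: the correspondence theorem for $f$, packaged as the two mutually inverse operations $H_i\mapsto f^{-1}(H_i)$ and $G_i\mapsto f(G_i)$, which will turn a covering family realizing $\mathrm{sec}(f)$ into one realizing $\sigma(f)$ and back. Throughout I use that $f$ is an epimorphism and that, on subgroups containing $\mathrm{Ker}(f)$, one has $A=f^{-1}(f(A))$.

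For part (1), I would first prove $\mathrm{sec}(f)\ge\sigma(f)$. Starting from proper subgroups $H_1,\dots,H_m$ covering $H$ with local sections $\sigma_i\colon H_i\to G$ and $m=\mathrm{sec}(f)$ (discarding any trivial $H_i$, which are redundant in a cover), I set $G_i:=f^{-1}(H_i)$. Surjectivity gives $f(G_i)=H_i$ and $G=f^{-1}(H)=\bigcup_i G_i$; properness of $H_i$ forces $G_i$ proper, since $G_i=G$ would give $H_i=f(G)=H$, and $\mathrm{Ker}(f)\subsetneq G_i$ because $H_i\neq\{1\}$. As $f\circ\sigma_i=\mathrm{incl}_{H_i}$, each $\sigma_i$ lands in $f^{-1}(H_i)=G_i$, so it is a global section of $f|_{G_i}\colon G_i\to H_i$, and Remark~\ref{rem:section-semidirectprod}(1) supplies the isomorphism $\Omega_i\colon\mathrm{Ker}(f)\rtimes f(G_i)\to G_i$ with $f\circ\Omega_i=\pi_2$. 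Hence $\{G_i\}$ is admissible for $\sigma(f)$ and $\sigma(f)\le m$. For the reverse inequality I run the opposite construction: given an admissible family $G_1,\dots,G_m$ for $\sigma(f)$, put $H_i:=f(G_i)$. Here $\mathrm{Ker}(f)\subseteq G_i$ yields $G_i=f^{-1}(H_i)$, so $H_i$ is proper (else $G_i=f^{-1}(H)=G$); the $H_i$ cover $H$ since $f$ is onto; and Remark~\ref{rem:section-semidirectprod}(2) turns each $\Omega_i$ into a local section $\sigma_i=\Omega_i\circ\iota_2\colon H_i\to G$ of $f$. This gives $\mathrm{sec}(f)\le m=\sigma(f)$, and hence equality.

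For part (2), I would use the canonical factorization $f=\overline{f}\circ q_f$, where $\overline{f}\colon G/\mathrm{Ker}(f)\to H$ is the isomorphism from the first isomorphism theorem, and then apply Proposition~\ref{prop:diagram} with $f'=q_f$, $\varphi=\mathrm{id}_G$, $\xi=\overline{f}$ and $\psi=\overline{f}^{\,-1}$. One checks $\psi\circ\xi=\mathrm{id}_{G/\mathrm{Ker}(f)}$ and the commutation $f'\circ\varphi=\psi\circ f$, i.e. $q_f=\overline{f}^{\,-1}\circ f$; since $\xi=\overline{f}$ is an (iso, hence) epimorphism and $\varphi$ is an isomorphism, the equality clause of Proposition~\ref{prop:diagram} gives $\mathrm{sec}(f)=\mathrm{sec}(q_f)$. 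For part (3) I would then chain the two previous parts: by part (2), $\mathrm{sec}(f)=\mathrm{sec}(q_f)$, so it remains to apply part (1) to $q_f$, which needs that $q_f$ admits no global section. This follows from the hypothesis on $f$, for if $t$ were a section of $q_f$ then $t\circ\overline{f}^{\,-1}$ would be a section of $f=\overline{f}\circ q_f$; contrapositively, $f$ not splitting forces $q_f$ not to split. Thus $\mathrm{sec}(q_f)=\sigma(q_f)$ and $\mathrm{sec}(f)=\sigma(q_f)$.

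I expect the main obstacle to be the bookkeeping in part (1): verifying that the operations $H_i\mapsto f^{-1}(H_i)$ and $G_i\mapsto f(G_i)$ preserve simultaneously all four defining conditions — properness, covering, the strict containment $\mathrm{Ker}(f)\subsetneq G_i$, and the splitting encoded by $\Omega_i$ — and, in particular, correctly invoking \emph{both} directions of Remark~\ref{rem:section-semidirectprod} to move between a local section over $H_i$ and the semidirect-product isomorphism over $G_i$. I would also note that the image construction never uses the absence of a global section, so the argument in fact yields $\mathrm{sec}(f)=\sigma(f)$ for every epimorphism (consistently with Remark~\ref{rem:basic-rem}, which gives $\mathrm{sec}(f)=\sigma(H)$ in the split case); I retain the stated hypothesis in part (1) only to match the phrasing used in part (3).
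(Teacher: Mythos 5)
Your proof is correct and follows essentially the same route as the paper's: the correspondence $H_i\mapsto f^{-1}(H_i)$ and $G_i\mapsto f(G_i)$ combined with both directions of Remark~\ref{rem:section-semidirectprod} for part (1), Proposition~\ref{prop:diagram} applied to the factorization $f=\overline{f}\circ q_f$ for part (2), and the combination of the two (plus the observation that $f$ splits iff $q_f$ splits) for part (3). The only substantive deviation is your argument that $f(G_i)$ is proper, deduced from $\mathrm{Ker}(f)\subseteq G_i$ and $G_i\neq G$ via $G_i=f^{-1}(f(G_i))$ rather than from the absence of a global section as the paper does; this is valid and, as you note, shows that the equality $\mathrm{sec}(f)=\sigma(f)$ in part (1) in fact holds for every epimorphism, so that hypothesis is redundant there.
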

\begin{proof}
 \noindent\begin{enumerate}
        \item[(1)] Suppose that $\mathrm{sec}(f)=m<\infty$. Consider proper subgroups $H_1,\ldots,H_m$ of $H$ such that $H=H_1\cup\cdots\cup H_m$, and for each $i=1,\ldots,m$, the restriction homomorphism $f_|:f^{-1}(H_i)\to H_i$ admits a global section. For each $i=1,\ldots,m$, let $G_i=f^{-1}(H_i)$, it is a proper subgroup of $G$ (here we use that $f$ is an epimorphism). Note that $G=G_1\cup\ldots\cup G_m$. Observe that $f(G_i)=H_i$ (here we use again that $f$ is an epimorphism), $\mathrm{Ker}(f)\subsetneq G_i$ (here we use $H_i\neq\{1\}$), and $f_|=f_{|G_i}$. Then, by Remark~\ref{rem:section-semidirectprod}(1), there exists an isomorphism $\Omega_i:\mathrm{Ker}(f)\rtimes f(G_i)\to G_i$ such that $f\circ\Omega_i=\pi_2$ for each $i=1,\ldots,m$. Hence, $\sigma(f)\leq m=\mathrm{sec}(f)$.

    Now, suppose that $f$ does not admit a global section. Let $\sigma(f)=m<\infty$ and consider proper subgroups $G_1,\ldots,G_m$ of $G$ such that $G=G_1\cup\cdots\cup G_m$ and for each $i=1,\ldots,m$, $\mathrm{Ker}(f)\subsetneq G_i$ and there exists an isomorphism $\Omega_i:\mathrm{Ker}(f)\rtimes f(G_i)\to G_i$ such that $f\circ\Omega_i=\pi_2$. For each $i=1,\ldots,m$, let $H_i=f(G_i)$, it is a proper subgroup of $H$ (here we use that $f$ does not admit a global section). Note that $H=H_1\cup\cdots\cup H_m$ (here we use that $f$ is an epimorphism). By Remark~\ref{rem:section-semidirectprod}(2), there exists a global section $s_i:H_i\to G_i$ of $f_{|G_i}:G_i\to H_i$, and, of course, $s_i$ is a global section of $f_|:f^{-1}(H_i)\to H_i$ (observe that $G_i\subseteq f^{-1}(H_i)$). Hence, $\mathrm{sec}(f)\leq m=\sigma(f)$. Therefore, $\mathrm{sec}(f)=\sigma(f)$. 
    \item[(2)] It follows from Proposition~\ref{prop:diagram} applying to the commutative diagram:
    \begin{eqnarray*}
\xymatrix{ G \ar[d]^{q_f} & & G \ar[rr]^{\mathrm{id}_G} \ar[d]_{f} & & G \ar[d]^{q_f}  \\
     G/\mathrm{Ker}(f) \ar[rr]_{\overline{f}} & & H  \ar[rr]_{(\overline{f})^{-1}} & &  G/\mathrm{Ker}(f) }
\end{eqnarray*} where $\overline{f}:G/\mathrm{Ker}(f)\to H$, given by $\overline{f}\left(g\mathrm{Ker}(f)\right)=f(g)$, is an isomorphism by the Fundamental Homomorphism Theorem.
\item[(3)] It follows from Item (1) and (2). Observe that $f$ does not admit a global section if and only if $q_f$ does not admit a global section.
    \end{enumerate}
\end{proof} 

 We have the following example.

\begin{example}\label{exam:covering-number-quotient}
 Let $G$ be a group and $N\trianglelefteq G$ be a normal subgroup. Let $q:G\to G/N$ be the usual quotient map. We know that the inequalities $\mathrm{sec}(q)\geq \sigma(G/N)\geq\sigma(G)$ always hold. Furthermore:
 \begin{enumerate}
     \item[(1)] If $q$ admits a global section (equivalently, there exists an isomorphism $\Omega:N\rtimes G/N\to G$ such that $q\circ\Omega=\pi_2$), by Remark~\ref{rem:basic-rem}, then \[\mathrm{sec}(q)=\sigma(G/N).\] 
      \item[(2)] If $q$ does not admit a global section, by Theorem~\ref{thm:sec-coveringnumberhom}(3), then $\mathrm{sec}(q)$ coincides with the least positive integer $m$ such that there exist $G_1,\ldots,G_m$ proper subgroups of $G$ such that $G=G_1\cup\cdots\cup G_m$ and for each $i=1,\ldots,m$, $N\subsetneq G_i$ and there exists an isomorphism $\Omega_i:N\rtimes G_i/N\to G_i$ such that $q\circ\Omega_i=\pi_2$.
 \end{enumerate}
\end{example}

Theorem~\ref{thm:sec-coveringnumberhom}(2) implies the following result.

\begin{corollary}
    Let $f:G\to H$ and $g:G\to K$ be homomorphisms. If $\mathrm{Ker}(f)=\mathrm{Ker}(g)$, then \[\mathrm{sec}(f)=\mathrm{sec}(g).\]
\end{corollary}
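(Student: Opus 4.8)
The plan is to reduce both invariants to the sectional number of one and the same quotient map and then apply Theorem~\ref{thm:sec-coveringnumberhom}(2). Set $N:=\mathrm{Ker}(f)=\mathrm{Ker}(g)$. The key observation, and really the whole content of the argument, is that the quotient maps attached to $f$ and to $g$ coincide on the nose: both $q_f$ and $q_g$ are the canonical projection
\[
q:G\longrightarrow G/N,\qquad q(x)=xN,
\]
since $G/\mathrm{Ker}(f)=G/\mathrm{Ker}(g)=G/N$ and the quotient map is determined by the subgroup $N$ alone, not by the target $H$ or $K$. Thus $q_f=q_g=q$ as homomorphisms, and in particular $\mathrm{sec}(q_f)=\mathrm{sec}(q_g)$.

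With this in hand I would argue as follows. In the setting where $f$ and $g$ are epimorphisms (recall from the discussion following Definition~\ref{defn:etale-sec-number} that otherwise the sectional number of the map in question is infinite), Theorem~\ref{thm:sec-coveringnumberhom}(2) applies to each of $f$ and $g$ and yields $\mathrm{sec}(f)=\mathrm{sec}(q_f)$ and $\mathrm{sec}(g)=\mathrm{sec}(q_g)$. Combining these two equalities with $q_f=q_g$ gives
\[
\mathrm{sec}(f)=\mathrm{sec}(q_f)=\mathrm{sec}(q_g)=\mathrm{sec}(g),
\]
as desired. This conclusion holds uniformly whether the common value $\mathrm{sec}(q)$ is finite or infinite.

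There is no serious obstacle here; the argument is essentially a bookkeeping identity once Theorem~\ref{thm:sec-coveringnumberhom}(2) is available. The only point that needs care is verifying that Theorem~\ref{thm:sec-coveringnumberhom}(2) is genuinely applicable, i.e. that one is in the epimorphism setting, and that $q_f$ and $q_g$ are identified as \emph{the same} map rather than merely as maps onto isomorphic quotient groups; the latter is immediate from the equality $\mathrm{Ker}(f)=\mathrm{Ker}(g)$. Once these two observations are recorded, writing the full proof is a matter of two lines.
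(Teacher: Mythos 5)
Your argument is exactly the paper's: the corollary appears immediately after Theorem~\ref{thm:sec-coveringnumberhom} with the one-line justification that part (2) \aspas{implies the following result}, and the intended reasoning is precisely that $q_f=q_g$ once the kernels agree, giving $\mathrm{sec}(f)=\mathrm{sec}(q_f)=\mathrm{sec}(q_g)=\mathrm{sec}(g)$. The one caveat --- which the paper's own statement shares --- is that Theorem~\ref{thm:sec-coveringnumberhom}(2) is stated for epimorphisms, and your parenthetical only disposes of the case where \emph{both} maps fail to be surjective, not the mixed case (e.g.\ $g=\mathrm{id}_G$ for noncyclic finite $G$ and $f$ an injective non-surjection have equal trivial kernels, yet $\mathrm{sec}(g)=\sigma(G)<\infty$ while $\mathrm{sec}(f)=\infty$), so the corollary really needs a surjectivity hypothesis on both maps for the reduction to go through.
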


On the other hand, we present a characterization of the finiteness of the sectional number. 

\begin{theorem}[Finiteness]\label{thm:finiteness-sec}
Let $f:G\to H$ be an epimorphism. If there exists an epimorphism $\varphi:H\to K$ such that $K$ is a finite noncyclic group and $f$ admits a local section on $\varphi^{-1}(\langle b\rangle)$ for each $b\in K$ with $b\neq 1$, then \[\mathrm{sec}(f)<\infty.\] The other implication holds whenever $H$ is abelian.  
\end{theorem}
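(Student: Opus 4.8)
The plan is to prove the two implications separately: the forward direction in full generality, and the converse using the abelian hypothesis only to secure normality of a suitable intersection.

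\textbf{Forward direction.} Assuming such an epimorphism $\varphi:H\to K$ exists, I would cover $K$ by its nontrivial cyclic subgroups. Since $K$ is finite and noncyclic, the family $\{\langle b\rangle : b\in K,\ b\neq 1\}$ is finite, each $\langle b\rangle$ is a \emph{proper} subgroup of $K$ (a generator would force $K$ cyclic), and $K=\bigcup_{b\neq 1}\langle b\rangle$ because every element lies in the cyclic subgroup it generates (and $1\in\langle b\rangle$ for any $b\neq 1$). Pulling back along the surjection $\varphi$, I would note that each $\varphi^{-1}(\langle b\rangle)$ is proper in $H$ (as $\varphi(\varphi^{-1}(\langle b\rangle))=\langle b\rangle\neq K$) and that these finitely many subgroups cover $H$. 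Since $f$ admits a local section on each $\varphi^{-1}(\langle b\rangle)$ by hypothesis, this is a finite cover of $H$ by proper subgroups each carrying a local section, so $\mathrm{sec}(f)<\infty$.

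\textbf{Converse, with $H$ abelian.} I would start from a cover realizing $\mathrm{sec}(f)=m<\infty$: proper subgroups $H_1,\dots,H_m$ with $H=\bigcup_i H_i$ and local sections $\sigma_i:H_i\to G$. The crux is to extract a \emph{finite} noncyclic quotient, and here I would invoke a classical theorem of B.\,H.\ Neumann on finite coset covers: if a group is the union of finitely many cosets, then it is already the union of those cosets whose subgroups have finite index. Applied to this subgroup cover, after relabeling I may assume every $H_i$ has finite index; these remain proper and still carry the sections $\sigma_i$. Because $H$ is abelian, $N:=\bigcap_{i=1}^m H_i$ is normal, and it has finite index as a finite intersection of finite-index subgroups. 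I would then set $K:=H/N$ (finite) and take $\varphi:=q_N:H\to K$.

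\textbf{Verification of the two properties.} Writing $\overline{H_i}:=\varphi(H_i)=H_i/N$, the $\overline{H_i}$ are proper subgroups of $K$ covering $K$; since a finite cyclic group is never the union of its proper subgroups, $K$ is noncyclic. For the local-section condition, given $b\in K$ with $b\neq 1$ I would choose $i$ with $b\in\overline{H_i}$; as $\overline{H_i}$ is a subgroup, $\langle b\rangle\subseteq\overline{H_i}$, whence $\varphi^{-1}(\langle b\rangle)\subseteq\varphi^{-1}(\overline{H_i})=H_i$ (using $N\subseteq H_i$). Restricting $\sigma_i$ to $\varphi^{-1}(\langle b\rangle)$ then gives a local section of $f$ there, since $f\circ\sigma_i=\mathrm{incl}_{H_i}$ restricts to the inclusion of the smaller subgroup.

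\textbf{Main obstacle.} The only nontrivial input is forcing $K$ to be \emph{finite}: the naive candidate $H/\bigcap_i H_i$ may well be infinite, and this is precisely what Neumann's theorem repairs by letting me discard the infinite-index members of the cover while preserving both the covering property and the local sections. The abelian hypothesis is used solely to guarantee that $\bigcap_i H_i$ is normal, so that $K$ is a genuine quotient and $\varphi$ a homomorphism; once the cover is reduced to finite-index subgroups, noncyclicity of $K$ and the step from $b\in\overline{H_i}$ to $\langle b\rangle\subseteq\overline{H_i}$ are purely formal.
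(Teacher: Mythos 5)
Your proof is correct and takes essentially the same route as the paper: the forward direction pulls back the cover of $K$ by its proper cyclic subgroups exactly as in the text, and the converse quotients $H$ by the intersection of the covering subgroups and transfers the local sections via $\varphi^{-1}(\langle b\rangle)\subseteq H_i$. The only cosmetic difference is in securing finiteness of the quotient: you apply Neumann's coset-cover theorem directly to discard the infinite-index $H_i$ first, whereas the paper cites the equivalent Corollary 1.1 of Rao--Reid, which applies to the intersection of all the $H_i$ because a cover realizing the minimal value $\mathrm{sec}(f)=m$ is automatically irredundant.
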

\begin{proof}
 Since $K=\bigcup_{\overset{b\in K}{b\neq 1}}\langle b\rangle$ and each $\langle b\rangle$ is a proper subgroup of $K$ (here we use that $K$ is noncyclic), $H=\bigcup_{\overset{b\in K}{b\neq 1}} \varphi^{-1}(\langle b\rangle)$ and each $\varphi^{-1}(\langle b\rangle)$  is a proper subgroup of $H$ (here we use that $\varphi$ is an epimorphism). Furthermore, by hypothesis, $f$ admits a local section on $\varphi^{-1}(\langle b\rangle)$ for each $b\in K$ with $b\neq 1$. This implies that $\mathrm{sec}(f)\leq |K|-1<\infty$ (the last inequality follows because $K$ is finite).

 Now, suppose that $H$ is abelian. Let $\mathrm{sec}(f)=m<\infty$ and consider $H_1\ldots,H_m$ proper subgroups of $H$ such that $H=H_1\cup\cdots\cup H_m$ and $f$ admits a local section on each $H_j$. Define $S=H_1\cap\cdots\cap H_m$ which is a normal subgroup of $H$ (here we use that $H$ is abelian). By \cite[Corollary 1.1, p. 2]{rao1992}, $H/S$ is a finite group. Observe that $H/S=\bigcup_{j=1}^{m} H_j/S$ and each $H_j/S$ is a proper subgroup of $H/S$ (here we use that $H_j$ is a proper subgroup of $H$ and $S\subseteq H_j$), hence $H/S$ is noncyclic. Consider $K=H/S$ and $\varphi:H\to K$ the usual quotient map (i.e., $\varphi(h)=h+S$ for any $h\in H$). Let $h+S\in H/S$ with $h\not\in S$. Observe that $h\in H_j$ for some $j\in\{1,\ldots,m\}$, then $\varphi^{-1}\left(\langle h+S\rangle\right)\subseteq H_j$ (here we use that $S\subseteq H_j$). Since, $f$ admits a local section on $H_j$, $f$ admits a local section on $\varphi^{-1}\left(\langle h+S\rangle\right)$.  
\end{proof}

Recall that if $f:G\to H$ admits a global section, then it admits a local section on each subgroup of $H$. Hence, Theorem~\ref{thm:finiteness-sec} recovers \cite[Theorem 2, p. 3]{rao1992}.

\begin{corollary}
    Let $A$ be an abelian group. Then $\sigma(A)<\infty$ if and only if there exists an epimorphism $\varphi:A\to K$ such that $K$ is a finite noncyclic group. 
\end{corollary}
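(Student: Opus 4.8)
The plan is to deduce this corollary directly from the Finiteness theorem (Theorem~\ref{thm:finiteness-sec}) by applying it to a particularly simple epimorphism whose sectional number equals $\sigma(A)$. The natural candidate is the identity homomorphism $\mathrm{id}_A:A\to A$. Since $\mathrm{id}_A$ trivially admits a global section (namely itself), Remark~\ref{rem:basic-rem} gives $\mathrm{sec}(\mathrm{id}_A)=\sigma(A)$. Thus the finiteness of $\sigma(A)$ is exactly the finiteness of $\mathrm{sec}(\mathrm{id}_A)$, and I can read off both implications of the corollary from Theorem~\ref{thm:finiteness-sec} once I verify that its local-section hypotheses hold automatically for the identity.

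The observation that makes the local-section conditions vacuous is the remark preceding the corollary: a homomorphism admitting a global section admits a local section on every subgroup of its codomain (just restrict the global section to the subgroup). For $\mathrm{id}_A$ this is transparent, since a local section $s$ of $\mathrm{id}_A$ defined on a subgroup $L$ of $A$ must satisfy $\mathrm{id}_A\circ s=\mathrm{incl}_L$, that is $s=\mathrm{incl}_L$; so the inclusion itself serves as the local section. Consequently, for any epimorphism $\varphi:A\to K$ and any $b\in K$, the map $\mathrm{id}_A$ admits a local section on $\varphi^{-1}(\langle b\rangle)$.

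With this in hand, both directions follow immediately. For the sufficiency, suppose there is an epimorphism $\varphi:A\to K$ with $K$ finite and noncyclic; the local-section hypothesis of Theorem~\ref{thm:finiteness-sec} applied to $f=\mathrm{id}_A$ is met by the previous paragraph, so $\mathrm{sec}(\mathrm{id}_A)=\sigma(A)<\infty$. For the necessity, since $A$ is abelian the converse half of Theorem~\ref{thm:finiteness-sec} applies to $\mathrm{id}_A$: from $\sigma(A)=\mathrm{sec}(\mathrm{id}_A)<\infty$ we obtain an epimorphism $\varphi:A\to K$ onto a finite noncyclic group $K$, which is precisely the asserted condition.

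Since every step is a direct specialization of an already established theorem, I do not anticipate a genuine obstacle. The only point requiring care is confirming that the local-section clause of Theorem~\ref{thm:finiteness-sec} is automatically satisfied by the identity, so that the hypotheses of that theorem reduce exactly to the existence of an epimorphism of $A$ onto a finite noncyclic group.
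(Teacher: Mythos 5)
Your proposal is correct and follows exactly the paper's own route: the paper likewise deduces the corollary by applying Theorem~\ref{thm:finiteness-sec} to the identity map $\mathrm{id}_A$ and using $\mathrm{sec}(\mathrm{id}_A)=\sigma(A)$. Your additional verification that the local-section hypothesis is vacuous for the identity is the same observation the paper makes in the remark immediately preceding the corollary.
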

\begin{proof}
    It follows from Theorem~\ref{thm:finiteness-sec} that it applies to the identity map $\mathrm{id}_G:G\to G$. In this case $\mathrm{sec}(\mathrm{id}_G)=\sigma(G)$.
\end{proof}

\subsection{Invariance}
From \cite[p. 34]{zapata2024}, two homomorphisms $f:G\to H$ and $f':G'\to H$ are said to be \textit{fibrewise   
equivalent} (or FE) if there exist homomorphisms $\psi:G\to G'$ and $\varphi:G'\to G$ such that $f'\circ\psi=f$ and $f\circ\varphi=f'$. This can be represented by the following commutative diagrams:
\begin{eqnarray*}
\xymatrix{ G \ar[rr]^{\psi} \ar[rd]_{f} & & G^{\prime} \ar[ld]^{f^\prime} & \\
        &  H & &} & \xymatrix{ G^{\prime} \ar[rd]_{f^\prime}  \ar[rr]^{\varphi}  & & G \ar[ld]^{f}& \\
        &  H & &}
\end{eqnarray*} Such homomorphisms $\psi$ and $\varphi$ are called \textit{fibrewise morphisms}.  

\medskip From \cite[Theorem 3.23]{zapata2024}, we obtain the FE-invariance of the sectional number.  

\begin{proposition}[Invariance]\label{prop:fe-invariance}
  Given homomorphisms $f:G\to H$ and $f^\prime:G^\prime\to H$, and a fibrewise morphism $\psi:G\to G^\prime$ (i.e., $f^\prime\circ \psi=f$). Then: \[\mathrm{sec}(f)\geq \mathrm{sec}(f').\] In particular, if $f$ and $f'$ are FE, then \[\mathrm{sec}(f)=\mathrm{sec}(f').\]
\end{proposition}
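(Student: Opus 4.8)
The plan is to prove the invariance statement directly from the definition of sectional number, using the fibrewise morphism $\psi:G\to G'$ to transport local sections of $f$ into local sections of $f'$. First I would observe that the composition condition $f'\circ\psi=f$ is exactly what is needed to push sections forward: if $L$ is a proper subgroup of $H$ and $s:L\to G$ is a local section of $f$ (so $f\circ s=\mathrm{incl}_L$), then I claim that $\psi\circ s:L\to G'$ is a local section of $f'$. Indeed, $f'\circ(\psi\circ s)=(f'\circ\psi)\circ s=f\circ s=\mathrm{incl}_L$, so the required commutativity holds with no further hypotheses on $\psi$.

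Next I would run the inequality through the combinatorial definition. Suppose $\mathrm{sec}(f)=m<\infty$ and choose proper subgroups $H_1,\ldots,H_m$ of $H$ with $H=H_1\cup\cdots\cup H_m$ and local sections $\sigma_i:H_i\to G$ of $f$ for each $i$. By the observation above, each $\psi\circ\sigma_i:H_i\to G'$ is a local section of $f'$ on the \emph{same} proper subgroup $H_i$. Since the covering $H=H_1\cup\cdots\cup H_m$ is already a covering of the common codomain by proper subgroups, the collection $\{H_i\}$ together with the sections $\{\psi\circ\sigma_i\}$ witnesses $\mathrm{sec}(f')\leq m=\mathrm{sec}(f)$. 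The case $\mathrm{sec}(f)=\infty$ is vacuous, so the inequality $\mathrm{sec}(f)\geq\mathrm{sec}(f')$ holds in all cases.

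For the final assertion, if $f$ and $f'$ are FE then there are fibrewise morphisms in both directions, namely $\psi:G\to G'$ with $f'\circ\psi=f$ and $\varphi:G'\to G$ with $f\circ\varphi=f'$. Applying the inequality already proved to $\psi$ gives $\mathrm{sec}(f)\geq\mathrm{sec}(f')$, and applying it symmetrically to $\varphi$ (interchanging the roles of $f$ and $f'$) gives $\mathrm{sec}(f')\geq\mathrm{sec}(f)$. Combining these yields $\mathrm{sec}(f)=\mathrm{sec}(f')$.

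I do not expect a genuine obstacle here, since the statement is asserted to follow from \cite[Theorem 3.23]{zapata2024} and the argument is essentially formal. The one point that deserves care is confirming that the \emph{same} subgroups $H_i$ remain proper and continue to cover $H$ after applying $\psi$; this is immediate precisely because $f$ and $f'$ share the codomain $H$, so no transformation of the base is involved and the covering is reused verbatim. Had the two maps had different codomains linked by a map, one would need to pull back or push forward the covering and check properness, as in Proposition~\ref{prop:diagram}; here that complication is absent, which is what makes the proof a short formal verification.
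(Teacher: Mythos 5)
Your proof is correct and is exactly the formal argument the paper delegates to \cite[Theorem 3.23]{zapata2024}: push each local section forward along $\psi$ via $f'\circ(\psi\circ\sigma_i)=f\circ\sigma_i=\mathrm{incl}_{H_i}$, reuse the covering of the common codomain $H$ verbatim, and apply the inequality symmetrically for the FE case. Your closing remark correctly identifies why no properness or covering check is needed here, in contrast to Proposition~\ref{prop:diagram}.
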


We have the following example.

\begin{example}
    Let $f:G\to H$ be a homomorphism.
    \begin{enumerate}
        \item[(1)] Let $K$ be a group. Consider the following commutative diagrams: 
\begin{eqnarray*}
\xymatrix{ G \ar[rr]^{\iota_1} \ar[rd]_{f} & & G\times K \ar[ld]^{f^\prime} & \\
        &  H & &} & \xymatrix{ G\times K \ar[rd]_{f^\prime}  \ar[rr]^{\pi_1}  & & G \ar[ld]^{f}& \\
        &  H & &}
\end{eqnarray*} where $\pi_1$ and $\iota_1$ are the usual projection and injection, respectively. By Proposition~\ref{prop:fe-invariance}, we have  $\mathrm{sec}(f\circ\pi_1)=\mathrm{sec}(f)$.  

Likewise, we obtain that the equality $\mathrm{sec}(f\circ\pi_2)=\mathrm{sec}(f)$ holds also.
\item[(2)] Consider an abelian group $(G,+)$. Let $f_+:G\times G\to H$ given by $f_+(a,b)=f(a+b)$ for any $(a,b)\in G\times G$. Observe that $f_+$ is a homomorphism.

Now, consider the following commutative diagrams: 
\begin{eqnarray*}
\xymatrix{ G \ar[rr]^{\iota_1} \ar[rd]_{f} & & G\times G \ar[ld]^{f_+} & \\
        &  H & &} & \xymatrix{ G\times G \ar[rd]_{f_+}  \ar[rr]^{+}  & & G \ar[ld]^{f}& \\
        &  H & &}
\end{eqnarray*} where $\iota_1$ is the usual injection. By Proposition~\ref{prop:fe-invariance}, we have $\mathrm{sec}(f_+)=\mathrm{sec}(f)$. In particular, $\mathrm{sec}(+)=\sigma(G)$.
    \end{enumerate}
\end{example}

\subsection{Upper bound} Before to present an upper for the sectional number, we present the notion of cyclic covering number (cf. after of \cite[Example 3.12]{zapata2024}).

\begin{definition}[Cyclic Covering Number]\label{defn:cyclic-cov-number}
  Let $G$ be a group. The \textit{cyclic covering number} of $f$, denoted by $\sigma_c(G)$, is the least positive integer $m$ such that there exist cyclic proper subgroups $C_1,\ldots,C_m$ of $G$ such that $G=C_1\cup\cdots\cup C_m$. We set $\sigma_c(G)=\infty$ if no such $m$ exists. 
\end{definition}

Observe that $\sigma_c(G)\geq\sigma(G)\geq 3$. If $G$ is cyclic, then $\sigma_{c}(G)=\infty$. The other implication is not true; for example, we have $\sigma_{c}(\mathbb{Q})=\infty$ because $\sigma_{c}(\mathbb{Q})\geq \sigma(\mathbb{Q})=\infty$ and $\mathbb{Q}$ is not cyclic. Furthermore, any cyclic subgroup of a noncyclic group is proper. 

\medskip Let $G$ be a finite noncyclic group. Every (proper) cyclic subgroup $\langle x\rangle$ of $G$ has $\varphi(o(x))$ generators. For each $x\in G$, the number of distinct cyclic subgroups of order $o(x)$ is given by \[\dfrac{\text{the number of distinct elements of order $o(x)$}}{\varphi(o(x))},\] where $\varphi$ is Euler's totient function. Hence, the number of distinct nontrivial proper cyclic subgroups of $G$ is $\sum_{\overset{x\in G}{x\neq 1}}\dfrac{1}{\varphi(o(x))}$, and thus \begin{eqnarray}\label{eqn:ciclic-number}
    \sigma_c(G)&\leq\displaystyle\sum_{\overset{x\in G}{x\neq 1}}\dfrac{1}{\varphi(o(x))}.
\end{eqnarray}  

\medskip Inequality~(\ref{eqn:ciclic-number}) can be strict (see Example~\ref{exam:cyclic-z2z2}(3) below). We have the following example.

\begin{example}\label{exam:cyclic-z2z2}
    \noindent \begin{enumerate}
        \item[(1)] Let $G$ be a finite noncyclic \textit{elementary $p$-group}\footnote{Observe that any elementary $2$-group is abelian. Furthermore, for every prime number $p\geq 3$, the Heisenberg group $H(\mathbb{F}_p)$ is a non-abelian elementary $p$-group of order $p^3$.} for some prime $p$, i.e., all the non-identity elements of $G$ have the same order $p$. In this case, $\varphi(o(x))=\varphi(p)=p-1$ for any $x\in G$, $x\neq 1$. In addition, if $\langle x\rangle\subseteq \langle y\rangle$ for some $x,y\in G\setminus\{1\}$, then $\langle x\rangle= \langle y\rangle$. Hence, \[\sigma_c(G)= \dfrac{|G|-1}{p-1}.\] 
        \item[(2)] Let $n\geq 2$ be an integer and $p$ be a prime. Let $\mathbb{Z}_p^n=\mathbb{Z}_p\times\cdots\times \mathbb{Z}_p$ ($n$ times). We have \[\sigma_{c}\left(\mathbb{Z}_p^n\right)=\dfrac{p^n-1}{p-1}\qquad \text{ and }\qquad \sigma\left(\mathbb{Z}_p^n\right)=p+1.\] For instance, $\sigma_{c}\left(\mathbb{Z}_p\times\mathbb{Z}_p\right)=\sigma\left(\mathbb{Z}_p\times\mathbb{Z}_p\right)=p+1$.
        \item[(3)] Let $Q_8:=\{1,i,j,k,-1,-i,-j,-k\}$ be the quaternion group under multiplication (recall that $i^2=j^2=k^2=ijk=-1$). Note that its only cyclic subgroups are $\langle 1\rangle=\{1\}, \langle -1\rangle=\{1,-1\}, \langle i\rangle=\{1,i,-1,-i\}, \langle j\rangle=\{1,j,-1,-j\},$ and $\langle k\rangle=\{1,k,-1,-k\}$. Observe that $Q_8=\langle i\rangle\cup\langle j\rangle\cup\langle k\rangle$, and thus $\sigma_{c}\left(Q_8\right)=\sigma\left(Q_8\right)=3$.
    \end{enumerate} 
\end{example}

Now, we have the following upper bound for the sectional number (cf. \cite[Theorem 3.32]{zapata2024}).

\begin{theorem}[Upper Bound]\label{thm:upper-bound}
 Let $f:G\to H$ be a locally sectionable homomorphism. We have \[\text{sec}(f)\leq\sigma_c(H).\]     
\end{theorem}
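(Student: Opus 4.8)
The plan is to reduce the inequality to the single observation that a locally sectionable homomorphism admits a local section on \emph{every} cyclic subgroup of $H$, and then to feed an optimal cyclic cover of $H$ directly into Definition~\ref{defn:etale-sec-number}. First I would dispose of the trivial case: if $\sigma_c(H)=\infty$ there is nothing to prove, so I may assume $\sigma_c(H)=m<\infty$ and fix cyclic proper subgroups $C_1,\dots,C_m$ of $H$ with $H=C_1\cup\cdots\cup C_m$.

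The key step is the claim that, since $f$ is locally sectionable, for every cyclic subgroup $C=\langle c\rangle$ of $H$ the homomorphism $f$ admits a local section on $C$. To prove it I would argue as follows: if $C$ is trivial the assertion is vacuous; otherwise $c\neq 1$, so by Definition~\ref{defn:localsec} there is a subgroup $L$ of $H$ with $c\in L$ on which $f$ admits a local section $s:L\to G$, i.e.\ $f\circ s=\mathrm{incl}_L$. Since $c\in L$ and $L$ is a subgroup, $C=\langle c\rangle\subseteq L$, and hence the restriction $s|_{C}:C\to G$ satisfies $f\circ s|_{C}=(f\circ s)|_{C}=(\mathrm{incl}_L)|_{C}=\mathrm{incl}_{C}$. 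Thus $s|_{C}$ is a local section of $f$ on $C$, using only that a local section on $L$ restricts to a local section on any subgroup of $L$, together with the fact that $\langle c\rangle$ is the smallest subgroup containing $c$.

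Applying the claim to each $C_i$ produces a local section $\sigma_i:C_i\to G$ of $f$. Since every $C_i$ is a proper subgroup of $H$ and $H=C_1\cup\cdots\cup C_m$, the family $\{C_1,\dots,C_m\}$ satisfies all the requirements of Definition~\ref{defn:etale-sec-number}, whence $\mathrm{sec}(f)\le m=\sigma_c(H)$, as desired.

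I expect the argument to be essentially routine; the only point requiring care is the passage from local sectionability, which is phrased through \emph{some} subgroup containing each nontrivial element, to the existence of sections on the \emph{prescribed} cyclic subgroups of a fixed cover. This gap is exactly what the restriction step closes, and the remaining bookkeeping (properness of the $C_i$, which is built into the definition of $\sigma_c$, and the trivial-subgroup edge case) presents no serious obstacle.
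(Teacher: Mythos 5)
Your proposal is correct and follows essentially the same route as the paper: both arguments take an optimal cyclic cover $H=C_1\cup\cdots\cup C_m$, use local sectionability to find for each generator $b_j$ of $C_j$ a subgroup $H_{\lambda_j}\ni b_j$ carrying a local section, note that $C_j=\langle b_j\rangle\subseteq H_{\lambda_j}$, and restrict. The only difference is presentational (you isolate the restriction step as an explicit claim), so there is nothing further to add.
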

\begin{proof}
     Since $f$ is locally sectionable, we have \[H=\bigcup_{\lambda\in\Lambda} H_\lambda,\] where $H_\lambda$ is a subgroup of $H$ provided of a local section $\sigma_\lambda:H_\lambda\to G$ of $f$, for each $\lambda\in\Lambda$. 
    
  If $\sigma_c(H)=\infty$, then the inequality $\text{sec}(f)\leq\sigma_c(H)$ always holds. Now, suppose $\sigma_c(H)=k<\infty$ and consider $\{C_1,\ldots,C_k\}$ a collection of cyclic proper subgroups of $H$ such that $H=C_1\cup\cdots\cup C_k$. Let $C_j=\langle b_j\rangle$ for each $j=1,\ldots,k$. Note that, for each $j=1,\ldots,k$, there exists $\lambda_j\in\Lambda$ such that $b_j\in H_{\lambda_j}$ and thus $C_j\subseteq H_{\lambda_j}$. Hence, the restriction $\sigma_{\lambda_j}:C_j\to G$ is a local section of $f$ for each $j=1,\ldots,k$. Therefore, $\text{sec}(f)\leq k=\sigma_c(H)$.
\end{proof}

Theorem~\ref{thm:upper-bound} together with the inequality $\mathrm{sec}(f)\geq \sigma(H)$ imply the following result.

\begin{corollary}\label{prop:cov-equal-cyclic}
 Let $f:G\to H$ be a locally sectionable homomorphism. Then \[\sigma(H)\leq\text{sec}(f)\leq\sigma_c(H).\] If $\sigma(H)=\sigma_c(H)$, then $\text{sec}(f)=\sigma_c(H)=\sigma(H)$.     
\end{corollary}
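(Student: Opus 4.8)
The plan is to prove this by a simple squeeze (sandwich) argument, since both of the bounding inequalities are already available in the excerpt. The goal is the chain
\[
\sigma(H)\leq\text{sec}(f)\leq\sigma_c(H),
\]
from which the equality claim follows immediately once the two outer terms coincide. Concretely, I would first record the lower bound, then invoke the upper bound from Theorem~\ref{thm:upper-bound}, and finally combine them.

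For the lower bound $\sigma(H)\leq\text{sec}(f)$, I would appeal to the observation made right after Definition~\ref{defn:etale-sec-number}: this holds for \emph{every} homomorphism $f:G\to H$, with no hypothesis needed. The reasoning is that any collection $H_1,\ldots,H_m$ of proper subgroups realizing $\text{sec}(f)$ is, in particular, a covering of $H$ by $m$ proper subgroups (the extra constraint of admitting a local section only makes the sectional number larger), so the minimal number $\sigma(H)$ of proper subgroups needed to cover $H$ cannot exceed $m=\text{sec}(f)$. In the degenerate case $\sigma(H)=\infty$ the inequality forces $\text{sec}(f)=\infty$ as well, and the chain still holds.

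For the upper bound $\text{sec}(f)\leq\sigma_c(H)$, I would apply Theorem~\ref{thm:upper-bound} directly, which is exactly the hypothesis needed here: $f$ is assumed locally sectionable. Putting the two inequalities together yields the displayed chain. For the final assertion, if $\sigma(H)=\sigma_c(H)$, then the two outer terms of the chain agree, so $\text{sec}(f)$ is squeezed between two equal values and must equal both; that is, $\text{sec}(f)=\sigma_c(H)=\sigma(H)$.

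I do not anticipate any genuine obstacle, as the corollary is a formal consequence of results already proved: the only content is assembling the lower bound (valid in full generality) with the upper bound of Theorem~\ref{thm:upper-bound} (which requires precisely the local sectionability assumed in the statement). The one point worth stating carefully is that the argument is robust when either bound is infinite, so that the inequalities and the equality case remain meaningful in $\mathbb{N}\cup\{\infty\}$ rather than only for finite values.
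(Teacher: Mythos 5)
Your proof is correct and matches the paper exactly: the corollary is stated there as an immediate consequence of the general inequality $\mathrm{sec}(f)\geq\sigma(H)$ (noted after Definition~\ref{defn:etale-sec-number}) combined with Theorem~\ref{thm:upper-bound}, which is precisely your squeeze argument. Your added remark on the infinite case is a sensible point of care but introduces nothing beyond the paper's reasoning.
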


We have the following example.

\begin{example}\label{exam:proper-sec-number-codomain-z2z2}
  \noindent\begin{enumerate}
      \item[(1)] Let $G$ be an elementary $p$-group, i.e., any nontrivial element of $G$ has order $p$. Let $f:G\to \mathbb{Z}_p\times\mathbb{Z}_p$ be an epimorphism (and, of course, it is locally sectionable, see Lemma~\ref{lem:torsion-free-localsec-equiv-epi}(2)).  By Example~\ref{exam:cyclic-z2z2}(2), $\sigma_{c}(\mathbb{Z}_p\times\mathbb{Z}_p)=\sigma(\mathbb{Z}_p\times\mathbb{Z}_p)=p+1$. Hence, by Corollary~\ref{prop:cov-equal-cyclic}, $\mathrm{sec}(f)= p+1$. 
      \item[(2)]  Let $G$ be an elementary $3$-group. Let $f:G\to \mathbb{Z}_3\times\mathbb{Z}_3\times\mathbb{Z}_3$ be an epimorphism (and, of course, it is locally sectionable, see Lemma~\ref{lem:torsion-free-localsec-equiv-epi}(2)). Observe that $\sigma_{c}(\mathbb{Z}_3\times\mathbb{Z}_3\times\mathbb{Z}_3)=13$ and $\sigma(\mathbb{Z}_3\times\mathbb{Z}_3\times\mathbb{Z}_3)=4$ (see Example~\ref{exam:cyclic-z2z2}(2)). By Theorem~\ref{thm:upper-bound}, \[4\leq \mathrm{sec}(f)\leq 13.\] 
  \end{enumerate} 
\end{example}

\section{Applications}\label{sec:applications}
In this section, we present several applications motivated by the categorical and cohomological viewpoint. 
\subsection{Weak pullback} From \cite[Definition 2.5]{zapata2024}, a \textit{weak pullback} is a strictly commutative diagram of homomorphisms of the form:
\begin{eqnarray}\label{xfyzz}
\xymatrix{ \rule{3mm}{0mm}& G^\prime \ar[r]^{\varphi'} \ar[d]_{f^\prime} & G \ar[d]^{f} & \\ &
       H^\prime  \ar[r]_{\,\,\varphi} &  H &}
\end{eqnarray}
such that for any strictly commutative diagram as shown on the left side of~(\ref{ddiagramadoble}), there exists a (not necessarily unique) morphism $h:Z\to X^\prime$ that makes the diagram on the right side of~(\ref{ddiagramadoble}) commute:
\begin{eqnarray}\label{ddiagramadoble}
\xymatrix{
L \ar@/_10pt/[dr]_{\alpha} \ar@/^30pt/[rr]^{\beta} & & G \ar[d]^{f}  & & &
L\rule{-1mm}{0mm} \ar@/_10pt/[dr]_{\alpha} \ar@/^30pt/[rr]^{\beta}\ar[r]^{h} & 
G^\prime \ar[r]^{\varphi'} \ar[d]_{f^\prime} & G \\
& H^\prime  \ar[r]_{\,\,\varphi} &  H & & & & H^\prime &  \rule{3mm}{0mm}}
\end{eqnarray}

Note that if ~(\ref{xfyzz}) is a pullback (i.e., such $h$ is unique), then it is a weak pullback.  

\medskip The following example shows a weak pullback which is not a pullback.

\begin{example}\label{weak-no-pullback} Let $f:G\to H$ and $g:K\to H$ be homomorphisms. Consider the \textit{canonical pullback} $K\times_H G=\{(a,b)\in K\times H:~g(a)=f(b)\}$, it is also call the \textit{fiber-product}. Let $S$ be a group such that there exist at least two distinct homomorphism from $K\times_H G$ to $S$. Note that the following diagram:  
\begin{eqnarray*}
\xymatrix{  (K\times_H G)\times S \ar[rrr]^{\pi_{2}} \ar[d]_{\pi_1}&& & G \ar[d]^{f} & \\ 
       K  \ar[rrr]_{g}&& &  H &}
\end{eqnarray*} is a weak pullback however it is not a pullback. 
\end{example}

Recall that if there exists an epimorphism from $H'$ to $H$, then $\sigma(H')\leq\sigma(H)$. This statement can be generalized to the following statement that describes the behavior of the sectional number under weak pullbacks (cf. \cite[Theorem 3.25]{zapata2024}).

\begin{theorem}[Under Weak Pullbacks]\label{thm:bajo-weak-pullback}
   Assume that ~(\ref{xfyzz}) is a weak pullback. If $\varphi$ is an epimorphism or $f'$ does not admit a global section, then \[\mathrm{sec}(f')\leq\mathrm{sec}(f).\] 
\end{theorem}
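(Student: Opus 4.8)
The plan is to mimic the proof of Proposition~\ref{prop:diagram}, replacing the explicit composition used there (which required a morphism $G\to G'$) by the lifting guaranteed by the weak pullback property~(\ref{ddiagramadoble}). First I would dispose of the trivial case: if $\mathrm{sec}(f)=\infty$ there is nothing to prove, so assume $\mathrm{sec}(f)=m<\infty$ and fix proper subgroups $H_1,\ldots,H_m$ of $H$ with $H=H_1\cup\cdots\cup H_m$, each equipped with a local section $s_i:H_i\to G$ of $f$ (so $f\circ s_i=\mathrm{incl}_{H_i}$). The candidate cover of $H'$ is $\{\varphi^{-1}(H_1),\ldots,\varphi^{-1}(H_m)\}$; since $H=\bigcup_i H_i$, applying $\varphi^{-1}$ gives $H'=\bigcup_i \varphi^{-1}(H_i)$ immediately.

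The key step is to produce, for each $i$, a local section of $f'$ on $L_i:=\varphi^{-1}(H_i)$. For this I would feed the weak pullback the data $\alpha:=\mathrm{incl}_{L_i}:L_i\hookrightarrow H'$ and $\beta:=s_i\circ(\varphi|_{L_i}):L_i\to G$, where $\varphi|_{L_i}:L_i\to H_i$ is the corestriction of $\varphi$ (well defined since $\varphi(L_i)\subseteq H_i$). The required commutativity $f\circ\beta=\varphi\circ\alpha$ holds because $f\circ s_i=\mathrm{incl}_{H_i}$ forces $f\circ\beta=\mathrm{incl}_{H_i}\circ(\varphi|_{L_i})=\varphi|_{L_i}=\varphi\circ\mathrm{incl}_{L_i}$. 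The weak pullback property~(\ref{ddiagramadoble}) then yields $h_i:L_i\to G'$ with $f'\circ h_i=\alpha=\mathrm{incl}_{L_i}$, so $h_i$ is exactly a local section of $f'$ on $L_i$ in the sense of Definition~\ref{defn:etale-sec-number}.

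It remains to verify that each $L_i=\varphi^{-1}(H_i)$ is a \emph{proper} subgroup of $H'$; this is the only place the hypothesis enters, and it splits into the announced dichotomy. If $\varphi$ is an epimorphism and $L_i=H'$, then $H_i=\varphi(\varphi^{-1}(H_i))=\varphi(H')=H$, contradicting the properness of $H_i$; hence every $L_i$ is proper. If instead $f'$ admits no global section and some $L_i=H'$, then the local section $h_i$ constructed above would be defined on all of $H'$, that is, a global section of $f'$, again a contradiction; so again every $L_i$ is proper. In either case $\{L_1,\ldots,L_m\}$ is a cover of $H'$ by $m$ proper subgroups each carrying a local section of $f'$, whence $\mathrm{sec}(f')\le m=\mathrm{sec}(f)$.

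The computations are routine; the main obstacle is conceptual, namely choosing the correct pair $(\alpha,\beta)$ so that the weak (rather than strict) pullback property applies — one must check that the outer square commutes before the lift $h_i$ is available — and recognizing that the two alternative hypotheses serve the single purpose of guaranteeing properness of the pulled-back subgroups, exactly as in Proposition~\ref{prop:diagram}.
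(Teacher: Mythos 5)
Your proof is correct and follows essentially the same route as the paper: pull back the covering subgroups along $\varphi$, use the weak pullback lifting property applied to $\alpha=\mathrm{incl}_{\varphi^{-1}(H_i)}$ and $\beta=s_i\circ\varphi_|$ to obtain the local sections of $f'$, and invoke the dichotomy hypothesis only to guarantee properness of the $\varphi^{-1}(H_i)$. Your write-up is in fact slightly more detailed than the paper's, which leaves the properness verification implicit.
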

\begin{proof}
Let $s:L\to G$ be a local section of $f$ defined in a subgroup $L$ of $H$. Consider the restriction homomorphism $\varphi_|:\varphi^{-1}(L)\to L$.
    
    Now, consider the following diagram given by the left side of~(\ref{1-diagramadobleee}). Since the diagram~(\ref{xfyzz}) is a weak pullback, there exists a homomorphism $\widetilde{s}:\varphi^{-1}(L)\to G'$ making commutative the diagram on the right side of~(\ref{1-diagramadobleee}).
\begin{eqnarray}\label{1-diagramadobleee}
\xymatrix{
\varphi^{-1}(L) \ar@/_10pt/[dr]_{\mathrm{incl}} \ar@/^30pt/[rr]^{s\circ \varphi_{|}} & & G \ar[d]^{f}  & & &
\varphi^{-1}(L)\rule{-1mm}{0mm} \ar@/_10pt/[dr]_{\mathrm{incl}} \ar@/^30pt/[rr]^{s\circ \varphi_{|}}\ar[r]^{\widetilde{s}} & 
G' \ar[r]^{\varphi'} \ar[d]_{f'} & G \\
& H'  \ar[r]_{\,\,\varphi} &  H & & & & H' &  \rule{3mm}{0mm}}
\end{eqnarray} Then, $\widetilde{s}$ is a local section of $f'$ defined in the subgroup $\varphi^{-1}(L)$ of $H'$.

Now, if $\mathrm{sec}(f)=k$ and $\{H_1,\ldots,H_k\}$ is a collection of proper subgroups of $H$ such that $H=H_1\cup\cdots\cup H_k$ and in each $H_i$ there exists a local section $s_i:H_i\to G$ of $f$. By the construction above, in each subgroup $\varphi^{-1}(H_i)$ there exists a local section $\widetilde{s}_i:\varphi^{-1}(H_i)\to G'$ of $f'$. Since $\varphi$ is an epimorphism or $f'$ does not admit a global section, $\{\varphi^{-1}(H_1),\ldots,\varphi^{-1}(H_k)\}$ is a collection of proper subgroups of $H'$ satisfying $H'=\varphi^{-1}(H_1)\cup\cdots\cup\varphi^{-1}(H_k)$. Hence, we conclude that $\mathrm{sec}(f')\leq k=\mathrm{sec}(f)$.  
\end{proof}

We have the following example.

\begin{example}\label{exam:sec-codomain-3}
      Let $K$ be a group such that $\sigma(K)=3$. By \cite[Theorem 2, p. 492]{harver1959}, there exists an epimorphism $\varphi:K\to \mathbb{Z}_2\times\mathbb{Z}_2$. Let $G$ be an (abelian) group such that any nontrivial element of $G$ has order two. Let $f:G\to \mathbb{Z}_2\times\mathbb{Z}_2$ be any epimorphism. By Example~\ref{exam:proper-sec-number-codomain-z2z2}, we have $\mathrm{sec}(f)=3$. Then, for any $f':G'\to K$ such that the commutative diagram:
  \begin{eqnarray*}
\xymatrix{ \rule{3mm}{0mm} G^\prime \ar[rr]^{\varphi'} \ar[d]_{f^\prime} & & G \ar[d]^{f}  \\ 
       K  \ar[rr]_{\,\,\varphi} &  & \mathbb{Z}_2\times\mathbb{Z}_2 }
\end{eqnarray*} is a weak pullback, we have $\mathrm{sec}(f')=3$ (see Theorem~\ref{thm:bajo-weak-pullback}). 
\end{example}

\subsection{Direct product} 
Let $f_1:G_1\to H_1$ and $f_2:G_2\to H_2$ be homomorphisms. The product map $f_1\times f_2:G_1\times G_2\to H_1\times H_2$ defined by:
\[\left(f_1\times f_2\right)(a_1,a_2)=\left(f_1(a_1),f_2(a_2)\right),~\forall (a_1,a_2)\in G_1\times G_2,\] is a homomorphism. In addition, given  $f:G\to H_1$ and $g:G\to H_2$ homomorphisms, the map $(f,g):G\to H_1\times H_2$ defined by:
\[(f,g)(a)=\left(f(a),g(a)\right),~\forall a\in G,\] is also a homomorphism.

\medskip  Proposition~\ref{prop:diagram} and Theorem~\ref{thm:bajo-weak-pullback} imply the following result.

 \begin{theorem}[Product]\label{thm:product}
  \noindent\begin{enumerate}
      \item[(1)] Let $f_1:G_1\to H_1$ and $f_2:G_2\to H_2$ be homomorphisms. If $f_1$ does not admit a global section, then \[\mathrm{sec}(f_1)\leq\mathrm{sec}(f_1\times f_2).\] Likewise, if $f_2$ does not admit a global section, then $\mathrm{sec}(f_2)\leq\mathrm{sec}(f_1\times f_2)$.
      \item[(2)] Let $f:G\to H$ be a homomorphism and $K$ be a group. We have \[\mathrm{sec}(f\times \mathrm{id}_K)\leq \mathrm{sec}(f)\] and the equality holds whenever $f$ does not admit a global section.  
  \end{enumerate}   
 \end{theorem}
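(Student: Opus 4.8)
The plan is to derive both parts mechanically from Proposition~\ref{prop:diagram} and Theorem~\ref{thm:bajo-weak-pullback} by inserting the evident projection and injection homomorphisms; the only delicate point is keeping track of which hypothesis branch of each cited result is actually available.

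For Part (1), I would apply Proposition~\ref{prop:diagram} with $f=f_1\times f_2$ in the role of the middle vertical map and $f'=f_1$ in the role of the outer vertical maps. As horizontal data I would take $\varphi=\pi_1\colon G_1\times G_2\to G_1$ the projection, $\psi=\pi_1\colon H_1\times H_2\to H_1$ the projection, and $\xi=\iota_1\colon H_1\to H_1\times H_2$ given by $\xi(b)=(b,1)$. Then $\psi\circ\xi=\mathrm{id}_{H_1}$, and the relevant square commutes since $f_1\circ\pi_1=\pi_1\circ(f_1\times f_2)$. The point to stress is that $\xi=\iota_1$ is \emph{not} an epimorphism when $H_2\neq 1$, so the surjectivity branch of Proposition~\ref{prop:diagram} is unavailable; it is precisely here that the hypothesis that $f_1$ admits no global section is consumed, activating the other branch and giving $\mathrm{sec}(f_1\times f_2)\geq\mathrm{sec}(f_1)$. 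The assertion for $f_2$ follows symmetrically, swapping the two factors.

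For Part (2), I would prove the two inequalities separately. For the upper bound $\mathrm{sec}(f\times\mathrm{id}_K)\leq\mathrm{sec}(f)$ I would use Theorem~\ref{thm:bajo-weak-pullback}: the commutative square with left vertical map $f\times\mathrm{id}_K\colon G\times K\to H\times K$, right vertical map $f\colon G\to H$, bottom map $\varphi=\pi_1\colon H\times K\to H$, and top map $\varphi'=\pi_1\colon G\times K\to G$ is (isomorphic to) the canonical pullback of $f$ along $\pi_1$, hence a weak pullback. Since $\varphi=\pi_1$ is an epimorphism, Theorem~\ref{thm:bajo-weak-pullback} yields $\mathrm{sec}(f\times\mathrm{id}_K)\leq\mathrm{sec}(f)$. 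For the reverse inequality, assuming $f$ admits no global section, I would simply invoke Part (1) with $f_1=f$ and $f_2=\mathrm{id}_K$, obtaining $\mathrm{sec}(f)\leq\mathrm{sec}(f\times\mathrm{id}_K)$; combining the two gives the claimed equality.

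The main obstacle is not computational but bookkeeping: one must verify that each chosen square really satisfies the hypotheses of the invoked result, and recognize why the two different results are forced. Proposition~\ref{prop:diagram} cannot itself deliver the upper bound in Part (2), since realizing $H\times K$ as the ``$H'$'' there would require $\psi\circ\xi=\mathrm{id}_{H\times K}$ with $\xi\colon H\times K\to H$, i.e.\ a retraction exhibiting $H\times K$ as a split subgroup of $H$, which is impossible for nontrivial $K$; this is exactly why the weak pullback formulation is needed for that direction. Conversely, the weak pullback route cannot replace Proposition~\ref{prop:diagram} in Part (1), because the relevant horizontal map $\iota_1$ is injective rather than surjective, so the argument must fall back on the no-global-section hypothesis.
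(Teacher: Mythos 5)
Your proposal is correct and follows essentially the same route as the paper: Part (1) via Proposition~\ref{prop:diagram} with $\varphi=\pi_1$, $\psi=\pi_1$, $\xi=\iota_1$ (using the no-global-section branch since $\iota_1$ is not surjective), and Part (2) via Theorem~\ref{thm:bajo-weak-pullback} applied to the pullback square along the epimorphism $\pi_1:H\times K\to H$, combined with Part (1) for the reverse inequality. Your closing remarks on why each cited result is forced in its respective part are accurate and consistent with the paper's argument.
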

 \begin{proof}
     \noindent\begin{enumerate}
         \item[(1)] It follows by applying the Proposition~\ref{prop:diagram} to the following commutative diagrams:
         \begin{eqnarray*}
\xymatrix{ G_1 \ar[d]^{f_1} & G_1\times G_2 \ar[r]^{\pi_1} \ar[d]_{f_1\times f_2} & G_1 \ar[d]^{f_1} & \\
     H_1 \ar[r]_{\iota_1} & H_1\times H_2  \ar[r]_{\pi_1} &  H_1 &} & \xymatrix{ G_2 \ar[d]^{f_2} & G_1\times G_2 \ar[r]^{\pi_2} \ar[d]_{f_1\times f_2} & G_2 \ar[d]^{f_2} & \\
     H_2 \ar[r]_{\iota_2} & H_1\times H_2  \ar[r]_{\pi_2} &  H_2 &}
\end{eqnarray*} where $\iota_1$ and $\iota_2$ are the usual inclusions, and $\pi_1$ and $\pi_2$ are the usual coordinate projections. 
\item[(2)] Note that the following diagram is commutative: 
\begin{eqnarray*}
\xymatrix{ \rule{3mm}{0mm}& G\times K \ar[rr]^{\pi_1} \ar[d]_{f\times\mathrm{id}_K}& & G \ar[d]^{f}  \\ 
&       H\times K  \ar[rr]_{\pi_1}& &  H }
\end{eqnarray*} and is a pullback. Since the usual first coordinate projection, $\pi_1$, is an epimorphism, by Theorem~\ref{thm:bajo-weak-pullback}, we conclude that the inequality $\mathrm{sec}(f\times \mathrm{id}_K)\leq\mathrm{sec}(f)$ always holds.

Now, suppose that $f$ does not admit a global section, by the Item (1), we have $\mathrm{sec}(f\times \mathrm{id}_K)\geq\mathrm{sec}(f)$. Therefore, $\mathrm{sec}(f\times \mathrm{id}_K)=\mathrm{sec}(f)$.
     \end{enumerate}
 \end{proof} 
\subsection{Lower bound}
Let $H$ be a group such that $\sigma(H)<\infty$. A \textit{categorical covering} of $H$ is a collection $\{H_i\}_{i=1}^{\ell}$ of proper subgroups of $H$ such that $H=\bigcup_{i=1}^{\ell} H_i$ and $\ell=\sigma(H)$. 

\medskip Observe that, given a homomorphism $f:G\to H$ with $\sigma(H)<\infty$, if for every categorical covering  $\{H_i\}_{i=1}^{\sigma(H)}$ of $H$, each $H_i$ admits a local section of $f$, then $\mathrm{sec}(f)=\sigma(H)$. On the other hand, we have the following lower bound.

\begin{theorem}[Lower Bound]\label{thm:lower-bound}
    Let $f:G\to H$ be a homomorphism such that $\sigma(H)<\infty$. If for every categorical covering  $\{H_i\}_{i=1}^{\sigma(H)}$ of $H$, at least one subgroup $H_i$ does not admit a local section of $f$, then \[\mathrm{sec}(f)>\sigma(H).\]
\end{theorem}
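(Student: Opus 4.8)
The plan is to argue by contradiction, exploiting the universal inequality $\mathrm{sec}(f)\geq\sigma(H)$ recorded immediately after Definition~\ref{defn:etale-sec-number}. Since that inequality holds for every homomorphism, it suffices to exclude the equality $\mathrm{sec}(f)=\sigma(H)$; the strict inequality $\mathrm{sec}(f)>\sigma(H)$ then follows at once. So I would assume, toward a contradiction, that $\mathrm{sec}(f)=\sigma(H)=:\ell$, and aim to produce from this a categorical covering all of whose members admit a local section, in violation of the hypothesis.

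By Definition~\ref{defn:etale-sec-number}, the assumption $\mathrm{sec}(f)=\ell$ furnishes proper subgroups $H_1,\ldots,H_\ell$ of $H$ with $H=H_1\cup\cdots\cup H_\ell$ such that each $H_i$ admits a local section $\sigma_i:H_i\to G$ of $f$. The next step is the only delicate point: the definition of $\mathrm{sec}(f)$ does not require the $H_i$ to be pairwise distinct, whereas a categorical covering (and the covering number $\sigma(H)$ itself) is defined in terms of \emph{distinct} proper subgroups. I would resolve this by discarding repetitions: if the family $H_1,\ldots,H_\ell$ contained a repeated subgroup, then its distinct members would already cover $H$ using fewer than $\ell=\sigma(H)$ proper subgroups, contradicting the minimality built into the definition of $\sigma(H)$. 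Hence the $H_i$ are automatically pairwise distinct, and $\{H_i\}_{i=1}^{\ell}$ is a collection of exactly $\sigma(H)$ distinct proper subgroups covering $H$, that is, a categorical covering of $H$ in the precise sense defined just before the theorem.

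Finally, this categorical covering has the property that \emph{every} one of its members $H_i$ admits a local section of $f$. This directly contradicts the hypothesis, which asserts that for every categorical covering of $H$ at least one member fails to admit a local section of $f$. The contradiction shows $\mathrm{sec}(f)\neq\sigma(H)$, and combining this with the always-valid $\mathrm{sec}(f)\geq\sigma(H)$ yields $\mathrm{sec}(f)>\sigma(H)$, as claimed.

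The argument is short, being essentially a direct unwinding of Definition~\ref{defn:etale-sec-number} against the definition of categorical covering. I expect the only genuine obstacle to be the distinctness bookkeeping described above: one must verify that a minimal-length family of locally sectionable proper subgroups realizing $\mathrm{sec}(f)$ is honestly a categorical covering, so that the hypothesis can be applied to it. Everything else reduces to the universal bound $\mathrm{sec}(f)\geq\sigma(H)$ together with the contradiction.
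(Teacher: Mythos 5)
Your proof is correct and takes essentially the same route as the paper's: assume $\mathrm{sec}(f)\le\sigma(H)$, extract a covering by $\sigma(H)$ proper subgroups each admitting a local section, observe that this is a categorical covering, and contradict the hypothesis. Your additional bookkeeping about the distinctness of the $H_i$ is a small refinement that the paper's proof leaves implicit, but it does not change the argument.
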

\begin{proof}
    Suppose $\mathrm{sec}(f)\leq\sigma(H)$. Let $H_1,\ldots, H_{\sigma(H)}$ be proper subgroups of $H$ such that $H=H_1\cup\ldots\cup H_{\sigma(H)}$, and $f$ admits a local section over each $H_i$. Observe that $\{H_i\}_{i=1}^{\sigma(H)}$ is a categorical covering of $H$. It contradicts the hypothesis of the theorem. Therefore, $\mathrm{sec}(f)>\sigma(H)$. 
\end{proof}

From the proof of \cite[Theorem 3, p. 3]{rao1992} we obtain the following statement.

\begin{proposition}
 \noindent\begin{enumerate}
     \item[(1)]  Let $f_1:G_1\to H_1$ and $f_2:G_2\to H_2$ be homomorphisms. If $H_1\times H_2$ is finite and abelian, then \[\min\{\mathrm{sec}(f_1),\mathrm{sec}(f_2)\}\leq\mathrm{sec}(f_1\times f_2)\] and the equality holds whenever $f_1\times f_2$ admits a global section (and, of course, $f_1$ and $f_2$ also admit global sections).
     \item[(2)] Let $f:G\to H$ be a homomorphism. Assume that $H$ is a finite abelian group and $H=A\oplus B$. Let $f_{| A}:f^{-1}(A)\to A$ and $f_{| B}:f^{-1}(B)\to B$ be the usual restrictions of $f$. Then \[\min\{\mathrm{sec}(f_{| A}),\mathrm{sec}(f_{| B})\}\leq\mathrm{sec}(f)\] and the equality holds whenever $f$ admits a global section (and, of course, $f_{| A}$ and $f_{| B}$ also admit global sections). 
 \end{enumerate}   
\end{proposition}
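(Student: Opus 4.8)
The plan is to handle both parts by a single device: restrict a minimal covering of the codomain to a direct factor and check that local sections restrict along with it. For part~(1), write $\mathrm{sec}(f_1\times f_2)=m$ and fix proper subgroups $L_1,\dots,L_m$ of $H_1\times H_2$ with $H_1\times H_2=\bigcup_i L_i$, each carrying a local section $s_i=(s_i^1,s_i^2)\colon L_i\to G_1\times G_2$ of $f_1\times f_2$. Intersecting with the sub-factor $H_1\times\{1\}$, which $\pi_1$ carries isomorphically onto $H_1$, yields subgroups $M_i:=\pi_1\big(L_i\cap(H_1\times\{1\})\big)$ covering $H_1$; and on $M_i$ the assignment $x\mapsto s_i^1(x,1)$ is a local section of $f_1$, since $f_1(s_i^1(x,1))=x$. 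It is essential to restrict to $H_1\times\{1\}$ rather than to project $L_i$ itself, because $s_i^1$ need not factor through $\pi_1\colon L_i\to H_1$. The symmetric construction produces, from the same covering, candidate local sections of $f_2$ over subgroups of $H_2$.

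The only thing left to control is properness. If every $M_i$ is a proper subgroup of $H_1$ we get $\mathrm{sec}(f_1)\le m$; if instead every $N_i:=\pi_2\big(L_i\cap(\{1\}\times H_2)\big)$ is proper in $H_2$ we get $\mathrm{sec}(f_2)\le m$; either way $\min\{\mathrm{sec}(f_1),\mathrm{sec}(f_2)\}\le m=\mathrm{sec}(f_1\times f_2)$. The construction can only fail when some $L_i$ swallows the whole factor $H_1\times\{1\}$ and, simultaneously, some $L_j$ swallows $\{1\}\times H_2$. To organize this I would split into two cases. If one of $f_1,f_2$ admits no global section, then Theorem~\ref{thm:product}(1) gives directly $\mathrm{sec}(f_1)\le\mathrm{sec}(f_1\times f_2)$ (resp. for $f_2$), hence $\min\{\mathrm{sec}(f_1),\mathrm{sec}(f_2)\}\le\mathrm{sec}(f_1\times f_2)$ with nothing more to prove. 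In the complementary case both $f_1$ and $f_2$ split, so by Remark~\ref{rem:basic-rem} every sectional number collapses to a covering number, $\mathrm{sec}(f_i)=\sigma(H_i)$ and $\mathrm{sec}(f_1\times f_2)=\sigma(H_1\times H_2)$, and both the inequality and the stated equality reduce to a purely group-theoretic comparison of $\sigma(H_1\times H_2)$ with $\min\{\sigma(H_1),\sigma(H_2)\}$ for finite abelian $H_1,H_2$.

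This last reduction is the main obstacle: it is exactly here that the finite abelian hypothesis is used decisively, and the delicate configurations are precisely those in which a minimal covering of $H_1\times H_2$ has a member absorbing an entire direct factor — controlling these through the primary decomposition of finite abelian groups is the argument transported from the proof of \cite[Theorem 3]{rao1992}. For part~(2) I would run the identical restriction argument directly on $f\colon G\to H$ with $H=A\oplus B$: a local section $s\colon L\to G$ of $f$ satisfies $s(L\cap A)\subseteq f^{-1}(A)$, because $f(s(\ell))=\ell\in A$ for $\ell\in L\cap A$, so $s$ restricts to a local section of $f_{|A}$ over $L\cap A$, and symmetrically for $B$. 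Hence a minimal covering of $H$ restricts to coverings of $A$ and of $B$ by local sections of $f_{|A}$ and $f_{|B}$, and the same properness dichotomy gives $\min\{\mathrm{sec}(f_{|A}),\mathrm{sec}(f_{|B})\}\le\mathrm{sec}(f)$; equivalently one applies part~(1) to $f_{|A}$ and $f_{|B}$, comparing $f$ with $f_{|A}\times f_{|B}$ through the sum map $f^{-1}(A)\times f^{-1}(B)\to G$, which is fibrewise over $H$ when $H$ is abelian (Proposition~\ref{prop:fe-invariance}). In both parts the equality clauses follow by specializing to the split situation, where all sectional numbers equal covering numbers and the equality case of Rao's theorem applies.
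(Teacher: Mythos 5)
The paper offers no internal proof of this Proposition beyond the one-line attribution to the proof of \cite[Theorem 3]{rao1992}, so there is nothing to match your argument against; what matters is whether your argument closes, and it does not. Your restriction construction (intersecting each $L_i$ with $H_1\times\{1\}$ and composing with $s_i^1$) is sound, and the case in which one of $f_1,f_2$ fails to admit a global section is correctly dispatched by Theorem~\ref{thm:product}(1). But the remaining case --- both $f_1$ and $f_2$ split --- is exactly where you stop and gesture at Rao--Reid, and the statement your reduction requires there, namely $\min\{\sigma(H_1),\sigma(H_2)\}\leq\sigma(H_1\times H_2)$ with equality, is false for general finite abelian factors: take $f_1=f_2=\mathrm{id}_{\mathbb{Z}_2}$, so that $\min\{\mathrm{sec}(f_1),\mathrm{sec}(f_2)\}=\sigma(\mathbb{Z}_2)=\infty$ while $\mathrm{sec}(f_1\times f_2)=\sigma(\mathbb{Z}_2\times\mathbb{Z}_2)=3$. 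Rao--Reid's Theorem 3 concerns the \emph{primary} decomposition, where the direct factors have coprime orders (so that every subgroup of the product itself decomposes); your reduction needs that coprimality and the Proposition's hypotheses do not supply it. The ``main obstacle'' you flag is therefore not a technical step omitted for brevity: it is the point at which the argument (and, in this generality, the statement being proved) breaks down.

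Two secondary points. The ``properness dichotomy'' in your second paragraph is not a dichotomy: nothing prevents some $L_i$ from containing $H_1\times\{1\}$ while some $L_j$ contains $\{1\}\times H_2$, which is precisely the configuration you must then handle by the case split, so the restriction construction does no work that the appeal to Theorem~\ref{thm:product}(1) does not already do. And in part (2), the map $f^{-1}(A)\times f^{-1}(B)\to G$, $(x,y)\mapsto xy$, need not be a homomorphism: abelianness of $H$ does not force $f^{-1}(A)$ and $f^{-1}(B)$ to commute elementwise in $G$, so the claimed fibrewise comparison of $f$ with $f_{|A}\times f_{|B}$ via Proposition~\ref{prop:fe-invariance} is unjustified as stated.
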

 
\subsection{$H$-points}
We present the following problem: Let $G$ and $H$ be groups. Given elements $a\in G$ and $b\in H$, when does there exist a homomorphism from $G$ into $H$ that takes $a$ into $b$?

\medskip Observe that, if $a$ is of finite order and $b$ is of infinite order, then there is no homomorphism from $G$ into $H$ that takes $a$ into $b$. In addition, in the case that $G=\langle a\rangle$ is a cyclic group, we have the following statement.

\begin{lemma}\label{prop:ciclico-existencia-hom}
 Let $H$ be a group and $G=\langle a\rangle$ be a cyclic group. Let $b\in H$, we have:
 \begin{enumerate}
     \item[(1)] If $a$ is of infinite order, then there exists $f\in\text{Hom}(\langle a\rangle,H)$ such that $f(a)=b$.
     \item[(2)] If $a$ and $b$ are of finite order. There exists $f\in\text{Hom}(\langle a\rangle,H)$ such that $f(a)=b$ if and only if $o(b)|o(a)$. 
 \end{enumerate}
\end{lemma}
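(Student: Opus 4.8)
The plan is to treat the two parts via the universal property of cyclic groups, since a homomorphism out of $\langle a\rangle$ is determined entirely by the image of the generator $a$, and the only obstruction to defining such a map is that the relations satisfied by $a$ must be respected by its image $b$.

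For part (1), when $a$ has infinite order, $\langle a\rangle$ is isomorphic to $\mathbb{Z}$ via $a^j\mapsto j$, so $a$ satisfies no nontrivial relations. First I would define $f:\langle a\rangle\to H$ by the rule $f(a^j)=b^j$ for each $j\in\mathbb{Z}$. I would check this is well defined: if $a^j=a^k$ then, since $a$ has infinite order, $j=k$, so $b^j=b^k$ and there is no ambiguity. It is then immediate that $f$ is a homomorphism, because $f(a^j a^k)=f(a^{j+k})=b^{j+k}=b^j b^k=f(a^j)f(a^k)$, and by construction $f(a)=b$. No condition on $b$ is needed here.

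For part (2), with both $a$ and $b$ of finite order, I would prove the two implications separately. For necessity, suppose $f\in\mathrm{Hom}(\langle a\rangle,H)$ with $f(a)=b$. Writing $n=o(a)$, we have $a^n=1$, hence $b^n=f(a)^n=f(a^n)=f(1)=1$, and a standard fact about element orders gives $o(b)\mid n=o(a)$. For sufficiency, assume $o(b)\mid o(a)$ and attempt to define $f(a^j)=b^j$ as before; the key step is well-definedness. If $a^j=a^k$ then $o(a)\mid (j-k)$, and since $o(b)\mid o(a)$ we get $o(b)\mid(j-k)$, so $b^j=b^k$ and the map is unambiguous. The homomorphism property and the equality $f(a)=b$ then follow exactly as in part (1).

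The only genuine subtlety, and the step I would be most careful about, is the well-definedness argument in the sufficiency direction of part (2): one must use the divisibility hypothesis $o(b)\mid o(a)$ precisely to guarantee that equal powers of $a$ map to equal powers of $b$. Everything else is routine verification of the homomorphism axioms. I would also note that this lemma cleanly subsumes the order-matching phenomenon appearing in Lemma~\ref{lem:torsion-free-localsec-equiv-epi} and the discussion of local sections, since a local section on $\langle b\rangle$ corresponds exactly to choosing a preimage of $b$ of matching order.
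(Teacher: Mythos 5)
Your proof is correct and follows exactly the same route as the paper, which simply defines $f(a^j)=b^j$ and leaves the verification implicit; you have supplied the well-definedness and order-divisibility details that the paper omits. No issues.
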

\begin{proof}
    It follows from considering the map $f:\langle a\rangle\to H$ given by $f(a^j)=b^j$ for all $j\in\mathbb{Z}$.
\end{proof}

Motivated by the problem mentioned above and Lemma~\ref{prop:ciclico-existencia-hom}, we present the following notion.

\begin{definition}[$H$-point]\label{defn:h-punto}
  Let $G,H$ be groups and $a\in G$. We say that $a$ is a \textit{$H$-point} if for each element $b\in H$, there exists $f\in\mathrm{Hom}(G,H)$ such that $f(a)=b$.  
\end{definition}

We have the following example.

\begin{example}\label{exam:h-puntos}
 Let $G$ be a group. \begin{enumerate}
      \item[(1)] Any element $a\in G$ is a $0$-point because the trivial homomorphism $\overline{0}:G\to 0$ satisfies $\overline{0}(a)=0$ for any $a\in G$. 
      \item[(2)] The identity element $1\in G$ is not a $H$-point for any nontrivial group $H$.
      \item[(3)] Let $H$ be an abelian group and $a\in G$. Observe that $a$ is a $H$-point if and only if its inverse $a^{-1}$ is a $H$-point. 
      \item[(4)] Let $H$ be a group and suppose that $G=\langle a\rangle$ is an infinite cyclic group. By the proof of  Lemma~\ref{prop:ciclico-existencia-hom}(1), we have the (only) generators $a$ and $a^{-1}$ of $\langle a\rangle$ are $H$-points.   
  \end{enumerate}  
\end{example}

Let $H$ be an abelian group, $G$ be a group, and $a\in G$. The \textit{evaluation map} $\mathrm{ev}_a:\mathrm{Hom}(G,H)\to H,$ given by \[\mathrm{ev}_a(f)=f(a)\] for any $f\in \mathrm{Hom}(G,H)$, is a homomorphism. Here $\mathrm{Hom}(G,H)$ is equipped with the usual pointwise operation, i.e., $(fg)(x)=f(x)g(x)$ for any $x\in G$ and $f,g\in \mathrm{Hom}(G,H)$. Hence, $\mathrm{Hom}(G,H)$ is an abelian group.

\medskip Observe that $a$ is a $H$-point if and only if $\mathrm{ev}_a$ is surjective. 



\medskip We have the following example. It presents a characterization of $H$-points in terms of the sectional number.

\begin{example}\label{exam:k4-punto-numero-sec}
    Let $G$ be a group, and $a\in G$. Let $n\geq 2$ be an integer, observe that $\mathrm{Hom}(G,\mathbb{Z}_p^n)$ is an noncyclic elementary abelian $p$-group. Recall that $a$ is a $\mathbb{Z}_p^n$-point if and only if the evaluation map $\mathrm{ev}_a:\mathrm{Hom}(G,\mathbb{Z}_p^n)\to \mathbb{Z}_p^n$ is an epimorphim (in general it is not bijective, for example, in the case that $G$ is not cyclic), and it is equivalent to $\mathrm{sec}\left(\mathrm{ev}_a\right)=p+1$ (see Example~\ref{exam:epi-vector-spaces}(2)). 
    
    Therefore, we conclude that $a$ is a $\mathbb{Z}_p^n$-point if and only if $\mathrm{sec}\left(\mathrm{ev}_a\right)=p+1$. 
\end{example}

\subsection{The poset $\mathfrak{L}(f)$}
Let $f:G\to H$ be a homomorphism. Define the poset $\mathfrak{L}(f)$ whose elements are all proper subgroups of $H$ that admit a local section of $f$, and $L\leq K$ if $L\subseteq K$. Likewise, consider the poset $\mathfrak{L}(H)$ whose elements are all proper subgroups of $H$, and $L\leq K$ if $L\subseteq K$. Observe that $\mathfrak{L}(f)$ is a subposet of $\mathfrak{L}(H)$, and they coincides whenever $f$ admits a global section. 

\medskip Let $(P,\leq)$ be a poset. An element $m\in P$ is called a \textit{maximal element} if there is no $x\in P$ such that $m<x$. That is, if $m\leq x$ implies $x=m$. In a finite poset, maximal elements always exist. In a Hasse diagram, maximal elements are topmost nodes--with no edges going upward from them. A maximal element is an element that is not strictly less than any other element.  A \textit{maximum element} is an element that is the largest in the entire set. In any poset there is at most one maximum element. 

\medskip A \textit{chain} is a totally ordered subset $C\subseteq P$, that is, for any $x,y\in C$, $x\leq y$ or $y\leq x$. An \textit{infinite ascending chain} is a chain with infinitely many distinct elements $x_1<x_2<x_3<\cdots$. A poset $P$ is \textit{Noetherian} if every ascending chain $x_1\leq x_2\leq x_3\leq \cdots$ eventually stabilizes, i.e., there exists $N\geq 1$ such that $x_n=x_{n+1}=x_{n+2}=\cdots$ for all $n\geq N$. Alternatively, there is no infinite strictly increasing chain in $P$. Observe that if a poset $P$ is Noetherian then every non-empty subset of $P$ has at least one maximal element (by Zorn's Lemma). The other implication always holds. Given a Noetherian poset $P$, if $a\in P$, then there exists a maximal element $x$ of $P$ such that $a\leq x$. In fact, consider the nom-empty subset $\mathcal{A}_a=\{b\in P:~a\leq b\}$ and let $x\in \mathcal{A}_a$ be a maximal element of $\mathcal{A}_a$. If $z\in P$ and $x\leq z$, then $z\in \mathcal{A}_a$ (because $a\leq x$), and thus $z=x$ (because $x$ is a maximal element of $\mathcal{A}_a$). Therefore, $x$ is a maximal element of $P$ and $a\leq x$.  

\medskip We consider the following definition.

\begin{definition}[Covering Number of a Poset]\label{defn:cov-number-lp}
 Let $P:=P_X$ be a poset whose elements are subsets of a fixed set $X$. The \textit{covering number} of $P$, denoted by $\sigma(P)$, is the least positive integer $m$ such that there exist maximal elements $M_1,\ldots,M_m$ of $P$ such that $X=M_1\cup\cdots\cup M_m$. We set $\sigma(P)=\infty$ if no such $m$ exists. 
\end{definition}

\medskip We have the following result.

\begin{theorem}\label{thm:sec-poset}
  Let $f:G\to H$ be a homomorphism. We have \[\mathrm{sec}(f)\leq \sigma(\mathfrak{L}(f))\] and the equality holds whenever $\mathfrak{L}(f)$ is Noetherian (e.g., $H$ is finite or more general Noetherian). 
  \end{theorem}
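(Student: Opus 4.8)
The plan is to prove the inequality $\mathrm{sec}(f) \leq \sigma(\mathfrak{L}(f))$ directly from the definitions, and then upgrade it to an equality under the Noetherian hypothesis by showing that any local-section cover by proper subgroups can be replaced by one consisting of maximal elements of $\mathfrak{L}(f)$.

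For the inequality, I would first dispose of the trivial case: if $\sigma(\mathfrak{L}(f)) = \infty$ the bound holds automatically. So assume $\sigma(\mathfrak{L}(f)) = m < \infty$. By Definition~\ref{defn:cov-number-lp}, there exist maximal elements $M_1, \ldots, M_m$ of $\mathfrak{L}(f)$ with $H = M_1 \cup \cdots \cup M_m$. Each $M_i$ is by construction a proper subgroup of $H$ admitting a local section of $f$. This is precisely a witnessing family for the sectional number in the sense of Definition~\ref{defn:etale-sec-number}, so $\mathrm{sec}(f) \leq m = \sigma(\mathfrak{L}(f))$.

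For the reverse inequality under the Noetherian assumption, suppose $\mathrm{sec}(f) = k < \infty$ (if $\mathrm{sec}(f) = \infty$ the two quantities both equal $\infty$ by the inequality just proved, so there is nothing to do). Let $H_1, \ldots, H_k$ be proper subgroups of $H$, each admitting a local section of $f$, with $H = H_1 \cup \cdots \cup H_k$. Each $H_i$ is an element of $\mathfrak{L}(f)$. Since $\mathfrak{L}(f)$ is Noetherian, the discussion preceding Definition~\ref{defn:cov-number-lp} guarantees that for each $i$ there is a maximal element $M_i$ of $\mathfrak{L}(f)$ with $H_i \leq M_i$, i.e.\ $H_i \subseteq M_i$. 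Then $H = \bigcup_{i=1}^k H_i \subseteq \bigcup_{i=1}^k M_i \subseteq H$, so $H = M_1 \cup \cdots \cup M_k$ is a cover by maximal elements of $\mathfrak{L}(f)$, giving $\sigma(\mathfrak{L}(f)) \leq k = \mathrm{sec}(f)$. Combined with the first part, this yields the equality. For the parenthetical examples, I would note that if $H$ is finite then $\mathfrak{L}(H)$ is finite, hence Noetherian, and $\mathfrak{L}(f)$ is a subposet of it, so it too is Noetherian; the same reasoning applies when $H$ is Noetherian as a group (every ascending chain of subgroups stabilizes).

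The step I expect to require the most care is the replacement of each $H_i$ by a maximal overgroup $M_i$ \emph{within} $\mathfrak{L}(f)$: I must ensure that passing to a larger subgroup preserves both membership in the poset (the larger subgroup still admits a local section and is still proper) and the covering property. Membership is exactly the defining condition of $\mathfrak{L}(f)$, so any maximal element dominating $H_i$ automatically admits a local section and is a proper subgroup of $H$; the covering property is then immediate from $H_i \subseteq M_i$. The genuine subtlety is that maximality is taken relative to $\mathfrak{L}(f)$ rather than to the full subgroup lattice $\mathfrak{L}(H)$, so the existence of $M_i$ rests entirely on $\mathfrak{L}(f)$ being Noetherian, which is where the hypothesis is essential.
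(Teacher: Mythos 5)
Your proposal is correct and follows essentially the same route as the paper's proof: the inequality is read off from Definition~\ref{defn:cov-number-lp}, and the equality is obtained by replacing each subgroup $H_i$ in a minimal local-section cover with a maximal element $M_i$ of $\mathfrak{L}(f)$ dominating it, whose existence is exactly what the Noetherian hypothesis supplies. Your additional remarks on why the $M_i$ remain proper subgroups admitting local sections, and on why finiteness or Noetherianity of $H$ passes to the subposet $\mathfrak{L}(f)$, are consistent with the paper's (more terse) argument.
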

\begin{proof}
  Inequality $\mathrm{sec}(f)\leq\sigma(\mathfrak{L}(f))$ follows from Definition~\ref{defn:cov-number-lp}. 

        Now assume that $\mathfrak{L}(f)$ is Noetherian. We will check the inequality $\sigma(\mathfrak{L}(f))\leq \mathrm{sec}(f)$. We can suppose $\mathrm{sec}(f)=m<\infty$; otherwise the inequality holds. Let $H_1,\ldots,H_m$ be proper subgroups of $H$ such that $H=H_1\cup\ldots\cup H_m$ and each $H_i$ admits a local section of $f$. The last condition implies that each $H_i$ is an element of $\mathfrak{L}(f)$. For each $i=1,\ldots,m$, let $M_i$ be a maximal element of $\mathfrak{L}(f)$ such that $H_i\leq M_i$ (here we use that $\mathfrak{L}(f)$ is Noetherian). Since $H=H_1\cup\ldots\cup H_m$, $H=M_1\cup\ldots\cup M_m$. Hence, $\sigma(\mathfrak{L}(f))\leq m=\mathrm{sec}(f)$. 
\end{proof}

Theorem~\ref{thm:sec-poset} implies the following example.

\begin{example}\label{exam:cov-cov-po}
    Let $G$ be a group. We have $\mathrm{sec}(\mathrm{id}_G)=\sigma(G)$ and $\mathfrak{L}(\mathrm{id}_G)=\mathfrak{L}(G)$. Hence, \[\sigma(G)\leq\sigma(\mathfrak{L}(G))\] and the equality holds whenever $G$ is Noetherian.
\end{example}

The equality $\sigma(G)=\sigma(\mathfrak{L}(G))$, given in Example~\ref{exam:cov-cov-po} for finite $G$, played a significant role in the proof of \cite[Theorem 3]{rao1992}.  
   
\subsection{Cohomology of groups}
In this section we show that $\mathrm{sec}(f)$ is the least number of subgroup restrictions needed to locally trivialize the 2-cocycle defining the extension. For this purpose, we follow the notation from \cite{brown2012}.  

\medskip Let $G$ be a group, and $A$ be a $G$-module. Let $\mathcal{E}(G,A)$ be the set of equivalence classes of extensions of $G$ by $A$ giving rise to the given action of $G$ on $A$. Then \begin{equation}\label{eqn:e-coho}
    \text{ there exists a bijection between $\mathcal{E}(G,A)$ and $H^2(G,A)$  }
\end{equation} (see \cite[Theorem 3.12, p. 93]{brown2012}). A homomorphism $\alpha:G'\to G$ induces a map $\alpha^\ast:\mathcal{E}(G,A)\to \mathcal{E}(G',A)$, which corresponds under the bijection~(\ref{eqn:e-coho}) to the homomorphism $H^2(\alpha,A):H^2(G,A)\to H^2(G',A)$. In fact, given $[w]\in\mathcal{E}(G,A)$ an equivalence class of the extension $0\to A\to E\to G\to 1$, $\alpha^\ast[w]=[w']$ where $w'$ is the extension given by $0\to A\to E\times_G G'\to G'\to 1$ with $E\times_G G'$ the fiber-product (see \cite[Exercise 1(a), p. 94]{brown2012}). In the case that $\alpha$ is the inclusion we write $\mathrm{res}^G_{G'}=H^2(\alpha,A)$ and call it a \textit{restriction map}. 

\medskip Let $[w]\in\mathcal{E}(G,A)$ be the equivalence class of the extension $0\to A\to E\to G\to 1$. We use the same notation $[w]\in H^2(G,A)$ for its corresponding $2$-cocycle via the bijection~(\ref{eqn:e-coho}). Recall that $[w]$ is the zero element in $H^2(G,A)$ if and only if its extension splits.  

\medskip Let $f:G\to H$ be an epimorphism such that $\mathrm{Ker}(f)$ is abelian. Observe that $G$ acts on $\mathrm{Ker}(f)$ by conjugation since $\mathrm{Ker}(f)$ is a normal subgroup of $G$; and the conjugation action of $\mathrm{Ker}(f)$ on itself is trivial because we suppose that $\mathrm{Ker}(f)$ is abelian, so there is an induced action of $H\cong G/\mathrm{Ker}(f)$ on $\mathrm{Ker}(f)$. We will consider $\mathrm{Ker}(f)$ as a $H$-module with this induced action. This is trivial if and only if $\mathrm{Ker}(f)\subseteq Z(G)$. 

\medskip We have the following result.

\begin{theorem}\label{thm:sec-coho}
    Let $f:G\to H$ be an epimorphism such that $\mathrm{Ker}(f)$ is abelian. Let $[w]$ be the equivalence class of the extension $0\to \mathrm{Ker}(f)\hookrightarrow G\stackrel{f}{\to} H\to 1$. We find that the sectional number $\mathrm{sec}(f)$ coincides with the least positive integer $m$ such that there exist proper subgroups $H_1,\ldots,H_m$ of $H$ such that $H=H_1\cup\cdots\cup H_m$, and $\mathrm{res}^H_{H_i}[w]=0$ in $H^2(H_i,\mathrm{Ker}(f))$ for each $i=1,\ldots,m$. 
\end{theorem}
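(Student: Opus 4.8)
The plan is to show that for a subgroup $L \leq H$, the restriction $\mathrm{res}^H_L[w] = 0$ in $H^2(L, \mathrm{Ker}(f))$ if and only if $f$ admits a local section on $L$. Once this equivalence is established, the two minimization problems defining $\mathrm{sec}(f)$ (via Definition~\ref{defn:etale-sec-number}) and the cohomological invariant in the statement become literally the same combinatorial problem—minimizing the number of proper subgroups covering $H$ subject to the identical per-subgroup constraint—so their least values coincide.

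First I would recall the functoriality described just before the statement: the restriction map $\mathrm{res}^H_L$ corresponds under the bijection~(\ref{eqn:e-coho}) to the pullback $\iota^\ast$ along the inclusion $\iota: L \hookrightarrow H$, and $\iota^\ast[w]$ is represented by the extension $0 \to \mathrm{Ker}(f) \to G \times_H L \to L \to 1$ obtained by forming the fiber-product. The key computation is to identify this fiber-product $G \times_H L = \{(g,\ell) : f(g) = \iota(\ell)\}$ with the restriction $f^{-1}(L)$, so that the pulled-back extension is exactly $0 \to \mathrm{Ker}(f) \to f^{-1}(L) \xrightarrow{f_|} L \to 1$. Then I invoke the standard fact (cited in the excerpt: $[w]$ is zero in $H^2$ iff its extension splits) to conclude that $\mathrm{res}^H_L[w] = 0$ if and only if the restricted extension $0 \to \mathrm{Ker}(f) \to f^{-1}(L) \to L \to 1$ splits, i.e., $f_|: f^{-1}(L) \to L$ admits a global section. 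By the observation in the Section-and-locally-sectionable subsection, a global section of $f_|: f^{-1}(L) \to L$ is precisely the same datum as a local section $s: L \to G$ of $f$, since $\mathrm{Ker}(f)$ is normal and the splitting takes values in $f^{-1}(L) \subseteq G$.

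Having established the equivalence \[ \mathrm{res}^H_L[w] = 0 \quad \Longleftrightarrow \quad f \text{ admits a local section on } L, \] the remainder is immediate: a collection $H_1, \ldots, H_m$ of proper subgroups with $H = H_1 \cup \cdots \cup H_m$ realizes the sectional number when each $H_i$ carries a local section of $f$, and this is now the same as requiring $\mathrm{res}^H_{H_i}[w] = 0$ for each $i$. Hence the least such $m$ computes both $\mathrm{sec}(f)$ and the cohomological invariant, proving their equality.

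The main obstacle I anticipate is the careful identification of the fiber-product extension $G \times_H L$ with $f^{-1}(L)$ at the level of extensions, ensuring that the induced $H$-module (here $L$-module) structure on $\mathrm{Ker}(f)$ coming from the pullback agrees with the conjugation action inherited from $f^{-1}(L) \subseteq G$; this is where one must be attentive that $\mathrm{Ker}(f)$ being abelian makes the action factor through $L \cong f^{-1}(L)/\mathrm{Ker}(f)$ compatibly. A secondary subtlety is verifying that a splitting of $f_|$ as an abstract homomorphism $L \to f^{-1}(L)$ is genuinely the same as a local section $L \to G$ in the sense of Definition~\ref{defn:etale-sec-number}; but since $f^{-1}(L)$ is literally a subgroup of $G$ and the inclusion commutes with $f$, composing the splitting with $f^{-1}(L) \hookrightarrow G$ gives the required homomorphism $s$ with $f \circ s = \mathrm{incl}_L$, and conversely any local section lands in $f^{-1}(L)$, so this direction is routine once stated.
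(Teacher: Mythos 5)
Your proposal is correct and follows essentially the same route as the paper's own proof: identifying $\mathrm{res}^H_{L}[w]$ with the class of the pulled-back extension $0\to \mathrm{Ker}(f)\to f^{-1}(L)\to L\to 1$ via the fiber-product, and then using that this class vanishes if and only if $f_|:f^{-1}(L)\to L$ splits, which is the same as $f$ admitting a local section over $L$. Your extra attention to the compatibility of the $L$-module structure and to the equivalence between splittings of $f_|$ and local sections of $f$ only makes explicit what the paper leaves implicit.
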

\begin{proof}
 Given a subgroup $H'$ of $H$, observe that $\alpha^\ast[w]=[w']$ where $w'$ is the extension given by $0\to \mathrm{Ker}(f)\hookrightarrow f^{-1}(H')\stackrel{f_|}{\to} H'\to 1$. Hence, $\mathrm{res}^H_{H'}[w]=[w']$ in $H^2(H',\mathrm{Ker}(f))$. Therefore, $\mathrm{res}^H_{H'}[w]=0$ in $H^2(H',\mathrm{Ker}(f))$ if and only if $f_|:f^{-1}(H')\to H'$ admits a global section, which is equivalent to say that $f$ admits a local section over $H'$. 
\end{proof}

Theorem~\ref{thm:sec-coho} implies the following example.

\begin{example}
  Let $f:G\to H$ be an epimorphism such that $\mathrm{Ker}(f)$ is abelian. 
  \begin{enumerate}
      \item[(1)] Assume that $\mathrm{Ker}(f)$ and $H$ are finite. If there exist proper subgroups $H_1,\ldots,H_m$ of $H$ such that $H=H_1\cup\cdots\cup H_m$, and the order of each $H_i$ is coprime with the order of $\mathrm{Ker}(f)$ (and of course $H^2(H_i,\mathrm{Ker}(f))=0$, see \cite[Corollary 10.2, p. 84]{brown2012}), then \[\mathrm{sec}(f)\leq m.\]  
      \item[(2)] Assume that $\sigma(H)<\infty$. If for every categorical covering  $\{H_i\}_{i=1}^{\sigma(H)}$ of $H$, $\mathrm{res}^H_{H_i}[w]\neq 0$ in $H^2(H_i,\mathrm{Ker}(f))$ for at least one subgroup $H_i$, then \[\mathrm{sec}(f)>\sigma(H).\]
  \end{enumerate}
\end{example}


\section*{Acknowledgements}
The first author would like to thank grant\#2023/16525-7, and grant\#2022/16695-7, S\~{a}o Paulo Research Foundation (FAPESP) for financial support. The second author would like to thank the FC-UNI project funding. 

\section*{Conflict of Interest Statement}
On behalf of all authors, the corresponding author declares that there are no conflicts of interest.

\bibliographystyle{plain}

\end{document}